\newtheorem{theorem}{Theorem}[section]
\newtheorem{lemma}[theorem]{Lemma}
\newtheorem{proposition}[theorem]{Proposition}
\newtheorem{corollary}[theorem]{Corollary}
\theoremstyle{definition}
\newtheorem{example}[theorem]{Example}
\theoremstyle{remark}
\newtheorem{remark}[theorem]{Remark}
\newtheorem{notation}[theorem]{Notation}
\numberwithin{equation}{section}
\newcommand*{\Rmod}{R\text{-}\textbf{Mod}}
\newcommand*{\RCmod}{R\mathcal C\text{-}\textbf{Mod}}
\newcommand*{\RSmod}{R         S\text{-}\textbf{Mod}}
\newcommand*{\C}{\mathcal C} 
\newcommand*{\D}{\mathcal D} 
\DeclareMathOperator{\Img}{Im}
\DeclareMathOperator{\id}{id}
\DeclareMathOperator{\Ob}{Ob}
\DeclareMathOperator{\supp}{Supp}
\DeclareMathOperator{\ind}{ind}
\DeclareMathOperator{\res}{res}
\DeclareMathOperator{\Mor}{Mor}
\DeclareMathOperator{\colim}{colim}
\DeclareMathOperator{\mub}{mub}
\DeclareMathOperator{\mlb}{mlb}
\DeclareMathOperator{\zip}{zip}
\DeclareMathOperator{\unzip}{unzip}
\begin{document}

\title{Finite presentation of finitely determined modules}


\author{Eero Hyry}
\email{eero.hyry@tuni.fi}
\address{Faculty of Information Technology and Communication Sciences,
         Tampere University,
				 Kanslerinrinne 1 (Pinni B),
         Tampere, 33100,
				 Finland
				 }

\author{Markus Klemetti}
\email{markus.o.klemetti@gmail.com}
\address{Faculty of Information Technology and Communication Sciences,
         Tampere University,
				 Kanslerinrinne 1 (Pinni B),
         Tampere, 33100,
				 Finland
				 }
				
\subjclass[2010]{55N31, 13E15.}

\date{January 19, 2023}
\keywords{Persistence module, Finitely determined, Finitely presented}

\begin{abstract}
In this article we study certain notions of `tameness' for the persistence modules studied in topological data analysis. In particular, we show that after adding infinitary points the so called finitely determined modules become finitely presented.
\end{abstract}

\maketitle



\section*{Introduction}
This article is motivated by topological data analysis, which is a recent field of mathematics studying the shape of data. One of the main methods of topological data analysis is persistent homology. In persistent homology one studies the data by associating a filtered topological space to it. By taking homology with coefficients in a field, one obtains a diagram of vector spaces and linear maps. This diagram is called a persistence module. In the standard case, the filtration is indexed by $\mathbb Z$ or $\mathbb R$, but the indexing
set can by any poset. Carlsson and Zomorodian realized that one can consider persistence modules indexed by $\mathbb Z^n$ as 
$\mathbb Z^n$-graded modules over a polynomial ring of $n$ variables (see~\cite{Carlsson}*{p.~78, Thm.~1}). This opened the way for methods of commutative algebra and algebraic geometry in topological data analysis. However, it is important to consider also more general indexing sets. More formally, a persistence module indexed by a poset $\C$ with coefficients in a field $k$ is a functor from $\C$, interpreted as a category, to the category of $k$-vector spaces. For the sake of generality, instead of a field $k$, we prefer in this article to work with any commutative ring $R$. Following the terminology of representation theory, we call a functor $\C\rightarrow \Rmod$ an $R\C$-module. In this terminology, a persistence module is then a $k\C$-vector space.

Persistence modules need not be finitely presented. For computational reasons, one has therefore introduced several notions of `tameness' for them. In this context, Miller defines in~\cite{Miller}*{p.~24, Def.~4.1} an encoding of an $R\C$-module $M$ by a poset $\D$ to be a poset morphism $f\colon \C\rightarrow \D$ with an $R\D$-module $N$ such that the restriction $\res_f N\cong M$. We define in \cite{Hyry}*{p.~22, Def.~4.1} an $R\C$-module $M$ to be \emph{$S$-determined} if there exists a subset $S\subseteq \C$ such that $\supp(M)\subseteq {\uparrow} S$, and for every $c\leq d$ in $\C$ the implication
\[
S\cap {\downarrow} c =  S \cap {\downarrow} d \ \Rightarrow \ M(c\leq d) \text{ is an isomorphism}
\]
holds. For any  $T\subseteq\C$, we use the usual notations 
\[
\hbox{${\uparrow} T := \{c\in \C \mid t\leq c \text{ for some }t\in T\}$}
\]
and
\[
\hbox{${\downarrow} T := \{c\in \C \mid c\leq t \text{ for some }t\in T\}$}
\]
for the upset generated and the downset cogenerated by $T$, respectively.
This is a straightforward generalization of the notion of a `positively $a$-determined' $\mathbb N^n$-graded module,
where $a\in \mathbb N^n$, as defined in \cite{Miller3}*{p.~186, Def.~2.1}. One can look at them as modules determined 
by their restriction to the interval $[0,a]\subseteq \mathbb N^n$. They are finitely generated.
Positively $a$-determined modules have been much studied by commutative algebraists. See e.g.~\cite{Brun} and the references therein. 

Suppose now that $S\subseteq \C$ is a finite set. We will consider the set $\tilde{S}$ of all minimal upper bounds of the subsets of $S$. We are going to define a functor $\alpha \colon \C\rightarrow \tilde{S}$ by mapping an element of $\C$ to the unique minimal upper bound of the elements of $S$ below it. In our main result, Theorem~\ref{S-det_luo_1}, we will prove that $M$ is $S$-determined for some finite $S\subseteq \C$ if and only if $\alpha$ is an encoding of $M$. 

The definition given by Miller in \cite{Miller3}*{p.~186, Def.~2.1} includes also the so called `finitely determined' modules. They are further studied in the context of topological data-analysis in~\cite{Miller}. Finitely determined modules are 
$\mathbb Z^n$-graded modules fully determined by their restriction to an interval $[a,b]\subseteq \mathbb Z^n$.They are not finitely presented in general. However, as a consequence of Theorem~\ref{S-det_luo_1} we can show in Theorem~\ref{aar_maar2} that after adding infinitary points to $\mathbb Z^n$, finitely determined modules in fact become finitely presented. We follow here an idea due to Perling (see~\cite{Perling}*{p.~16}). We also show in Proposition \ref{PerlingProposition} that our terminology is compatible with that of admissible posets used in~\cite{Perling}.



\section{Preliminaries}
Throughout this article we use the terminology of category theory. We will always assume that $\C$ is a small category and $R$ a commutative ring. For any set $X$, we denote by $R[X]$ the free $R$-module generated by $X$. An $R\C$-module is a functor from $\C$ to the category of $R$-modules. A morphism between $R\C$-modules is a natural transformation. For more details on $R\C$-modules, we refer to~\cite{Luck} and~\cite{TomDieck}.

Recall first that an $R\C$-module $M$ is called
\begin{itemize}
\item \emph{finitely generated} if there exists an epimorphism
\[
\bigoplus_{i\in I} R[\Mor_{\C}(c_i,-)] \rightarrow M,
\]
where $I$ is a finite set, and $c_i\in \C$ for all $i\in I$;
\item \emph{finitely presented} if there exists an exact sequence
\[
\bigoplus_{j\in J} R[\Mor_{\C}(d_j,-)] \rightarrow \bigoplus_{i\in I} R[\Mor_{\C}(c_i,-)] \rightarrow M\rightarrow 0,
\]
where $I$ and $J$ are finite sets, and $c_i, d_j\in \C$ for all $i\in I$ and $j\in J$. 
\end{itemize}
See, for example,~\cite{Popescu}.

Let $\varphi\colon S\rightarrow \C$ be a functor between small categories. Recall that the \emph{restriction} $\res_{\varphi}\colon \RCmod \rightarrow \RSmod$ is the functor defined by precomposition with $\varphi$, and the \emph{induction} $\ind_{\varphi}\colon \RSmod \rightarrow \RCmod$ is its left Kan extension along $\varphi$. The induction is the left adjoint of the restriction. The counit of this adjunction gives us for every $R\C$-module $M$ the  \emph{canonical morphism} 
\[
\mu_M\colon  \ind_{\varphi}\res_{\varphi} M\rightarrow M.
\]

More explicitly, for any $R\C$-module $M$ and $RS$-module $N$, we have the pointwise formulas 
\[
(\res_{\varphi} M)(s) = M(\varphi(s)) \quad \text{and}\quad (\ind_{\varphi} N)(c)  = \underset{(t,u)\in (\varphi/c)}\colim N(t)
\]
for all $s\in S$ and $c\in \C$. Here $(\varphi/c)$ denotes the slice category. Its objects are pairs $(s,u)$, where $s\in S$ and $u\colon \varphi(s)\rightarrow c$ is a morphism in $\C$. For $(s,u),(t,v)\in \Ob
(\varphi/c)$, a morphism $(s,u) \rightarrow (t,v)$ is a morphism $f\colon s\rightarrow t$ in $S$ with $v\varphi(f)=u$.
We will typically assume that $S$ is a full subcategory of $\C$ and that $\varphi$ is the inclusion functor. In this case, we use the notations $\res_S$ and $\ind_S$ instead of $\res_{\varphi}$ and $\ind_{\varphi}$. If $\C$ is also a poset, the latter formula yields
\[
(\ind_S N)(c) = \underset{t\in S, \ t\leq c}\colim N(t).
\] 

Let $\C$ be a small category and $S\subseteq \C$ a full subcategory. An $R\C$-module $M$ is said to be \emph{$S$-generated} if the natural morphism 
\[
\rho_M\colon \bigoplus_{s\in S}M(s)[\Mor_\C(s,-)]\rightarrow M
\]
is an epimorphism. Here
\[
M(s)[\Mor_\C(s,-)]:= M(s) \otimes_R R[\Mor_\C(s,-)],
\]
where the tensor product is taken pointwise. Since the morphism $\rho_M$ factors through the canonical morphism $\mu_M$, we see that $M$ is $S$-generated if and only if $\mu_M$ is an epimorphism.

Following \cite{Djament}*{p.~13, Prop.~2.14}, we say that $M$ is \emph{$S$-presented} if it is $S$-generated and the following condition holds: Given an exact sequence of $R\C$-modules
\[
0\rightarrow L \rightarrow N \rightarrow M \rightarrow 0,
\]
where $N$ is $S$-generated, then $L$ is $S$-generated. It is shown in \cite{Djament}*{p.~13, Prop.~2.14}, that $M$ is $S$-presented if and only if $\mu_M$ is an isomorphism.


\section{Modules over strongly bounded posets}

In order to prove Theorem~\ref{S-det_luo_1}, we need to recall some order theory. In the following, $\C$ always denotes a poset.

\begin{notation}\label{hattu}
Let $S\subseteq \C$ be a finite subset.  We denote the set of minimal upper bounds of $S$ by $\mub(S)$. If $S$ is finite, we set
\[
\hat{S} := \bigcup_{\emptyset\neq S'\subseteq S} \mub(S').
\]
In other words, $\hat{S}$ is the set of minimal upper bounds of non-empty subsets of $S$.
\end{notation}

We say that the poset $\C$ is \emph{strongly bounded from above} if every finite $S\subseteq \C$ has a unique minimal upper bound in $\C$. If $\C$ is strongly bounded from above, then $\hat{S}$ is finite. The condition of $\C$ being strongly bounded from above is equivalent to $\C$ being a bounded join-semilattice. Also note that if $\C$ is strongly bounded from above, then $\C$ is weakly bounded from above and mub-complete, as defined in \cite{Hyry}*{p.~23, Def.~4.5, Def.~4.6}. 

Let $\C$ be strongly bounded from above, and let $S\subseteq \C$ be a finite set. From now on, we consider $\mub(S)$ as an element of $\C$, and not as a (one element) set. In particular, every element of $\hat{S}$ is then of the form $\mub(S')$, where $S'\subseteq S$ is a non-empty subset. Viewing $\C$ as a join-semilattice, we have the join-operation
\[
a\vee b := \mub(a,b):=\mub(\{a,b\}).
\]
Extending this operation to finite sets, we get an operation that coincides with taking minimal upper bounds.

\begin{lemma}\label{hattu_pois}
Let $\C$ be strongly bounded from above, and let $S\subseteq \C$ be a finite subset. Then $\hat{\hat S} = \hat{S}$.
\end{lemma}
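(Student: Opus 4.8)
The plan is to prove the two inclusions $\hat{S}\subseteq\hat{\hat S}$ and $\hat{\hat S}\subseteq\hat{S}$ separately. For the first, I would use that since $\C$ is strongly bounded from above, the unique minimal upper bound of a one-element set $\{a\}$ is $a$ itself, i.e.\ $\mub(\{a\})=a$. Hence every $a\in\hat{S}$ occurs in $\hat{\hat S}$, being the minimal upper bound of the nonempty subset $\{a\}\subseteq\hat{S}$. (If $S=\emptyset$, both sides are empty and there is nothing to prove, so one may assume $S\neq\emptyset$.)

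For the reverse inclusion, let $a\in\hat{\hat S}$, so that $a=\mub(T)$ for some nonempty $T=\{a_1,\dots,a_k\}\subseteq\hat{S}$. By the definition of $\hat{S}$, each $a_i$ equals $\mub(S_i)$ for some nonempty subset $S_i\subseteq S$. The heart of the argument is the identity
\[
a \;=\; \mub\Bigl(\,\bigcup_{i=1}^{k} S_i\,\Bigr),
\]
from which the conclusion is immediate: $\bigcup_{i=1}^{k}S_i$ is a nonempty subset of $S$, so its minimal upper bound lies in $\hat{S}$.

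To establish this identity I would pass to the join operation $\vee$ on $\C$, regarded as a bounded join-semilattice. Since $\mub$ of a finite set coincides with the iterated join of its elements (as noted before the lemma), we have $a_i=\bigvee_{s\in S_i}s$ and $a=\bigvee_{i=1}^{k}a_i$, and the claim reduces to $\bigvee_{i=1}^{k}\bigl(\bigvee_{s\in S_i}s\bigr)=\bigvee_{s\in\bigcup_i S_i}s$, which follows from associativity, commutativity and idempotency of $\vee$ by a straightforward induction on $k$.

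The main obstacle is bookkeeping rather than depth: one must be careful to keep the passage between the set-valued operation $\mub$ and the binary operation $\vee$ rigorous, in particular invoking at each step that minimal upper bounds of finite sets exist and are unique — which is exactly the hypothesis that $\C$ is strongly bounded from above — so that all of these manipulations, and the use of $\mub(\{a\})=a$, are legitimate.
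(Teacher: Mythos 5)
Your proof is correct and follows essentially the same route as the paper's: writing an element of $\hat{\hat S}$ as a minimal upper bound of minimal upper bounds and collapsing it via associativity of the join to $\mub$ of the union of the underlying subsets of $S$. The only difference is that you also spell out the easy inclusion $\hat{S}\subseteq\hat{\hat S}$ via $\mub(\{a\})=a$, which the paper leaves implicit.
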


\begin{proof}
An element $s\in \hat{\hat{S}}$ may be written as
\[
s=\mub(\mub(S_1),\ldots,\mub(S_n)), 
\]
where $S_1,\ldots,S_n$ are (finite) non-empty subsets of $S$. Since the join-operation is associative in join-semilattices, we see that
\[
s=\bigvee_{i=1}^n(\bigvee S_i) = \bigvee (\bigcup_{i=1}^n S_i).
\]
This implies that $s=\mub(S_1\cup\cdots\cup S_n)$, which belongs to $\hat{S}$ by definition.
\end{proof}

Assume that $\C$ is strongly bounded from above. Then $\C$ has a minimum element $\min(\C)=\mub(\emptyset)$. Let $S\subseteq \C$ be a finite subset. Denote 
\[
\tilde{S} := \hat{S}\cup\{\min(\C)\}.
\]
We define a poset morphism $\alpha_S\colon \C\rightarrow \tilde{S}$ by setting 
\[
\alpha_S(c)=\mub(S \cap{\downarrow} c)
\]
for every $c\in \C$. In other words, $\alpha_S$ maps each $c\in \C$ to the minimal upper bound of the elements of $S$ below it. To show that $\alpha_S$ actually is a poset morphism, suppose that $c\leq d$ in $\C$. Then $S\cap{\downarrow}c\subseteq S\cap {\downarrow} d$, which implies that $\alpha_S(c)\leq \alpha_S(d)$. 

\begin{proposition}\label{alfat}
Let $\C$ be strongly bounded from above, and let $S\subseteq \C$ be a finite subset. Then $\alpha_S=\alpha_{\hat{S}}=\alpha_{\tilde{S}}$.
\end{proposition}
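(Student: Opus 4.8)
The plan is to show the two equalities $\alpha_S = \alpha_{\hat S}$ and $\alpha_{\hat S} = \alpha_{\tilde S}$ separately, and in each case to verify them pointwise: for every $c \in \C$ one must check that the minimal upper bounds $\mub(S \cap {\downarrow} c)$, $\mub(\hat S \cap {\downarrow} c)$, and $\mub(\tilde S \cap {\downarrow} c)$ all coincide as elements of $\C$. (Note that the three maps have formally different codomains $\tilde S$, $\widetilde{\hat S}$, $\widetilde{\tilde S}$, but Lemma~\ref{hattu_pois} gives $\hat{\hat S} = \hat S$, and one checks easily that adding $\min(\C)$ does not change things, so all three codomains are the same subset $\tilde S$ of $\C$; I would dispatch this bookkeeping first.)

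For $\alpha_S = \alpha_{\hat S}$, fix $c \in \C$ and set $a := \mub(S \cap {\downarrow} c)$. The key observation is that $S \cap {\downarrow} c \subseteq \hat S \cap {\downarrow} c$ (each element of $S$ lying below $c$ is in $\hat S$ via the singleton subset, and is still below $c$), so $\mub(\hat S \cap {\downarrow} c) \geq a$. For the reverse inequality, I would show $a$ is an upper bound of $\hat S \cap {\downarrow} c$: an element of $\hat S \cap {\downarrow} c$ has the form $\mub(S')$ for some non-empty $S' \subseteq S$ with $\mub(S') \leq c$; then every $s \in S'$ satisfies $s \leq \mub(S') \leq c$, so $S' \subseteq S \cap {\downarrow} c$, whence $\mub(S') = \bigvee S' \leq \bigvee (S \cap {\downarrow} c) = a$ using that the join is monotone in the join-semilattice. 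Since $\mub(\hat S \cap {\downarrow} c)$ is the \emph{least} upper bound, it is $\leq a$, giving equality.

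For $\alpha_{\hat S} = \alpha_{\tilde S}$, the only difference between $\hat S$ and $\tilde S$ is the element $\min(\C)$. Since $\min(\C) \leq c$ for all $c$, it always lies in $\tilde S \cap {\downarrow} c$, but adjoining the minimum element to any set does not change its set of upper bounds, hence does not change the minimal upper bound. Thus $\mub(\tilde S \cap {\downarrow} c) = \mub(\hat S \cap {\downarrow} c)$ for every $c$ (with the convention that $\mub(\emptyset) = \min(\C)$, so the edge case where $\hat S \cap {\downarrow} c = \emptyset$ is handled automatically). Combining the two equalities yields $\alpha_S = \alpha_{\hat S} = \alpha_{\tilde S}$.

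I do not expect any serious obstacle here; the argument is a routine manipulation of joins in a semilattice. The one point requiring a little care is the interplay between the formally distinct codomains and the repeated use of $\hat{\hat S} = \hat S$ and of the convention $\mub(\emptyset) = \min(\C)$; making sure these conventions are applied consistently so that ``$\alpha_S(c) = \alpha_{\hat S}(c)$ as elements of $\C$'' genuinely upgrades to an equality of poset morphisms is the only thing to watch.
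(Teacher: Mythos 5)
Your proposal is correct and follows essentially the same route as the paper: both reduce to the pointwise claim $\mub(S\cap{\downarrow}c)=\mub(\hat S\cap{\downarrow}c)=\mub(\tilde S\cap{\downarrow}c)$, dispose of the $\tilde S$ case via the $\min(\C)$ convention, and prove the nontrivial inequality by expressing elements of $\hat S\cap{\downarrow}c$ as joins of elements of $S$ lying below $c$ (the paper phrases this for the single element $\mub(\hat S\cap{\downarrow}c)$ using $\hat{\hat S}=\hat S$, you phrase it for each element of $\hat S\cap{\downarrow}c$ — an immaterial difference). No gaps.
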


\begin{proof}
Using Lemma \ref{hattu_pois}, we first note that $\tilde{\hat{S}}=\tilde{S}$ and $\tilde{\tilde{S}}=\tilde{S}$. Let $c\in \C$. We claim that
\[
\mub(S\cap {\downarrow}c) = \mub(\hat{S}\cap {\downarrow}c) = \mub(\tilde{S}\cap {\downarrow}c). 
\]
The latter equation follows from the fact that for all subsets $T\subseteq \C$, we have $\mub(T)=\mub(T\cup\{\min(\C)\})$. In particular, $\mub(T)=\min(\C)$, if $T=\emptyset$.

For the first equation, since $S\subseteq \hat{S}$, we have $\mub(S\cap {\downarrow}c) \leq \mub(\hat{S}\cap {\downarrow}c)$. On the other hand, $\hat{S}\cap{\downarrow}c$ is a subset of $\hat{S}$. Thus $\mub(\hat{S}\cap{\downarrow}c)\in \hat{\hat S}=\hat{S}$, where the equation follows from Lemma \ref{hattu_pois}. By the definition of $\hat{S}$, we may now write
\[
\mub(\hat{S}\cap{\downarrow}c)=\mub(s_1,\ldots,s_n),
\]
where $s_1,\ldots,s_n\in S$. Furthermore, $\mub(\hat{S}\cap{\downarrow}c)\leq c$, so we also have $s_1,\ldots,s_n\leq c$. This implies that
\[
\mub(s_1,\ldots,s_n) \leq \mub(S\cap {\downarrow}c),
\]
which completes the proof.
\end{proof}

Encouraged by Proposition \ref{alfat}, we will just write $\alpha$ instead of $\alpha_S$, if there is no risk of confusion. Before moving on to the main theorem of this section, we require one more lemma.

\begin{lemma}\label{hattu_alajoukko}
Let $\C$ be strongly bounded from above, and let $S\subseteq \C$ be a finite subset. Then $\hat{S}\cap {\downarrow}\alpha(c) = \hat{S}\cap {\downarrow}c$ for all $c\in\C$.
\end{lemma}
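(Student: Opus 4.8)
The plan is to prove the two inclusions separately, using throughout the standard fact that since $\C$ is strongly bounded from above it is a bounded join-semilattice: for a finite $T\subseteq\C$ the element $\mub(T)=\bigvee T$ is the \emph{least} upper bound of $T$. In particular $\mub$ is monotone in its argument (if $T\subseteq U$ then $\mub(T)\leq\mub(U)$), we have $t\leq\mub(T)$ for every $t\in T$, and $\mub(T)\leq c$ whenever $c$ is an upper bound of $T$. Recall also that $\alpha(c)=\mub(S\cap{\downarrow}c)$.

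For the inclusion $\hat{S}\cap{\downarrow}\alpha(c)\subseteq\hat{S}\cap{\downarrow}c$, let $x\in\hat{S}$ with $x\leq\alpha(c)$. Since $c$ is an upper bound of the set $S\cap{\downarrow}c$, we get $\alpha(c)=\mub(S\cap{\downarrow}c)\leq c$, and hence $x\leq\alpha(c)\leq c$, so $x\in\hat{S}\cap{\downarrow}c$.

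For the reverse inclusion $\hat{S}\cap{\downarrow}c\subseteq\hat{S}\cap{\downarrow}\alpha(c)$, let $x\in\hat{S}$ with $x\leq c$. By the definition of $\hat{S}$ we may write $x=\mub(S')$ for some non-empty subset $S'\subseteq S$. Every $s\in S'$ satisfies $s\leq\mub(S')=x\leq c$, so $S'\subseteq S\cap{\downarrow}c$. Monotonicity of $\mub$ then gives $x=\mub(S')\leq\mub(S\cap{\downarrow}c)=\alpha(c)$, hence $x\in\hat{S}\cap{\downarrow}\alpha(c)$. Combining the two inclusions yields the claim.

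I expect no real obstacle here: the only point that needs care is making explicit that in a bounded join-semilattice minimal upper bounds coincide with least upper bounds, so that monotonicity and the universal property of $\mub$ are available; once that is recorded, both inclusions are immediate.
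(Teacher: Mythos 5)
Your proof is correct. The first inclusion is handled exactly as in the paper (via $\alpha(c)\leq c$). For the nontrivial inclusion the paper takes a shortcut you do not: it invokes Proposition~\ref{alfat} to rewrite $\alpha(c)=\alpha_{\hat S}(c)=\mub(\hat S\cap{\downarrow}c)$, whence any $d\in\hat S\cap{\downarrow}c$ lies below $\alpha(c)$ simply because $\mub(\hat S\cap{\downarrow}c)$ is an upper bound of that set. You instead give a self-contained argument, unpacking $x=\mub(S')$ with $S'\subseteq S$, observing $S'\subseteq S\cap{\downarrow}c$, and applying monotonicity of the join; this is in effect an inlining of the very computation that appears inside the paper's proof of Proposition~\ref{alfat} (where $\mub(\hat S\cap{\downarrow}c)$ is rewritten as $\mub(s_1,\ldots,s_n)$ with $s_i\in S$, $s_i\leq c$). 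The paper's route is shorter once Proposition~\ref{alfat} is available and makes the logical dependence on that proposition explicit; yours is independent of it and would survive even if Proposition~\ref{alfat} were not stated. Your preliminary remarks (that in a bounded join-semilattice $\mub$ of a finite set is the least upper bound, hence monotone) are exactly the facts needed and are justified by the paper's identification of strongly bounded posets with bounded join-semilattices.
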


\begin{proof}
Let $c\in \C$. We immediately see that $\hat{S}\cap {\downarrow} \alpha(c) \subseteq \hat{S}\cap {\downarrow} c$, because $\alpha(c)\leq c$. Suppose that $d\in \hat{S}\cap {\downarrow} c$. We need to show that $d\leq \alpha(c)$. This follows from Proposition \ref{alfat}, because now
\[
\alpha(c)=\alpha_{\hat{S}}(c)=\mub(\hat{S}\cap{\downarrow}c).
\]
\end{proof}

Let $\C$ be strongly bounded from above, let $M$ be an $R\C$-module, and let $S\subseteq \C$ be a finite subset. The morphism $\alpha$ gives rise to a natural transformation
\[
T_{\alpha}\colon  \res_{\alpha}\res_{\tilde{S}} M \rightarrow M,
\] 
where for any $c\in \C$, $T_{\alpha,c}$ is the morphism
\[
M(\alpha(c)\leq c)\colon  (\res_{\alpha}\res_{\tilde{S}} M)(c)=M(\alpha(c)) \rightarrow M(c).
\]

We are now able to prove our main result

\begin{theorem}\label{S-det_luo_1}
Let $\C$ be strongly bounded from above, and let $M$ be an $R\C$-module. Given a finite subset $S\subseteq \C$, the following conditions are equivalent:
\begin{itemize}
\item[1)] For all $c\leq d$ in $\C$,
\[
S\cap {\downarrow} c =  S \cap {\downarrow} d \ \Rightarrow \ M(c\leq d) \text{ is an isomorphism};
\]
\item[2)] $T_\alpha\colon \res_{\alpha}\res_{\tilde{S}}M \rightarrow M$ is an isomorphism;
\item[3)] $\alpha$ is an encoding of $M$.
\end{itemize}
If $\min(\C)\in S$, then condition 1) says that $M$ is $S$-determined.
\end{theorem}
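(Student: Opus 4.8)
The plan is to prove the chain of implications $2) \Rightarrow 3) \Rightarrow 1) \Rightarrow 2)$, with the last implication being the substantial one.

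For $2) \Rightarrow 3)$: recall that $\res_{\tilde S}M$ is an $R\tilde S$-module, so if $T_\alpha$ is an isomorphism then by definition $\res_\alpha(\res_{\tilde S}M) \cong M$, exhibiting $\alpha$ together with the module $\res_{\tilde S}M$ as an encoding of $M$ in the sense of Miller recalled in the introduction. For $3) \Rightarrow 1)$: suppose $\alpha$ is an encoding, say $\res_\alpha N \cong M$ for some $R\tilde S$-module $N$. If $c \le d$ in $\C$ with $S \cap {\downarrow}c = S \cap {\downarrow}d$, then $\alpha(c) = \mub(S\cap{\downarrow}c) = \mub(S\cap{\downarrow}d) = \alpha(d)$, so $\alpha(c\le d) = \id$, hence $M(c\le d) = (\res_\alpha N)(c \le d) = N(\alpha(c\le d))$ is the identity, in particular an isomorphism.

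The main work is $1) \Rightarrow 2)$. First I would record that $T_{\alpha,c} = M(\alpha(c)\le c)$, so it suffices to show $M(\alpha(c)\le c)$ is an isomorphism for every $c \in \C$; naturality of $T_\alpha$ is automatic since it is built from the functoriality of $M$. The idea is to apply hypothesis 1) to the comparison $\alpha(c) \le c$ itself, which requires checking $S \cap {\downarrow}\alpha(c) = S \cap {\downarrow}c$. One inclusion is clear from $\alpha(c) \le c$. For the reverse, take $s \in S$ with $s \le c$; then $s \le \mub(S\cap{\downarrow}c) = \alpha(c)$ because $s \in S \cap {\downarrow}c$ and $\alpha(c)$ is an upper bound of that set. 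This is essentially the content of Lemma~\ref{hattu_alajoukko}, and in fact $S\cap{\downarrow}\alpha(c) = S \cap {\downarrow}c$ follows by intersecting the conclusion of that lemma appropriately, or directly as just sketched. Applying 1) to $\alpha(c) \le c$ then gives that $M(\alpha(c)\le c) = T_{\alpha,c}$ is an isomorphism, as desired.

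The only genuine subtlety — and the step I would be most careful about — is the support condition hidden in the definitions. The notion of $S$-determined in the introduction requires both the implication in 1) \emph{and} $\supp(M) \subseteq {\uparrow}S$; but condition 1) as stated in the theorem is only the implication. The final sentence of the theorem asserts that when $\min(\C) \in S$, condition 1) alone \emph{is} the statement that $M$ is $S$-determined, because then $\min(\C) \in S$ forces ${\uparrow}S = \C$ (every element lies above $\min(\C) \in S$), so $\supp(M) \subseteq {\uparrow}S$ is automatic. I would make this observation explicit, noting that ${\uparrow}\{\min(\C)\} = \C$. Beyond that, the argument is a matter of assembling the order-theoretic lemmas already proved; no new estimates are needed.
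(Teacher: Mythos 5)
Your proof is correct, and for the substantial implication $1)\Rightarrow 2)$ it takes a genuinely different and more elementary route than the paper. The paper first reduces to the case $\min(\C)\in S$ so that $M$ is literally $S$-determined, then invokes an external result (that $S$-determined modules are $\hat{\hat S}$-presented), passes to $\hat S$-presentedness via Lemma~\ref{hattu_pois}, and finally computes $(\res_\alpha\res_{\tilde S}M)(c)=M(\alpha(c))$ as the colimit $(\ind_{\hat S}\res_{\hat S}M)(c)\cong M(c)$ using Lemma~\ref{hattu_alajoukko}. You instead apply hypothesis 1) directly to the comparison $\alpha(c)\leq c$: the required equality $S\cap{\downarrow}\alpha(c)=S\cap{\downarrow}c$ holds because $\alpha(c)\leq c$ gives one inclusion and the fact that $\alpha(c)=\mub(S\cap{\downarrow}c)$ is an upper bound of $S\cap{\downarrow}c$ gives the other (this is the $S$-version of Lemma~\ref{hattu_alajoukko}, and is even easier than the $\hat S$-version since it needs neither Proposition~\ref{alfat} nor Lemma~\ref{hattu_pois}). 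Your route buys self-containedness --- no citation of the earlier paper, no induction functors or colimits --- and it has the additional merit of showing that the isomorphism $M(\alpha(c))\cong M(c)$ is precisely the structure map $M(\alpha(c)\leq c)=T_{\alpha,c}$, a point the paper's colimit computation leaves slightly implicit. The paper's route, on the other hand, records along the way that such an $M$ is $\hat S$-presented, which is information reused elsewhere. Both arguments (like the definition of $T_\alpha$ itself) rest on $\alpha(c)\leq c$, i.e.\ on identifying the unique minimal upper bound with the join in a strongly bounded poset. Your remarks on $2)\Rightarrow 3)$, $3)\Rightarrow 1)$, and on why $\min(\C)\in S$ makes the support condition $\supp(M)\subseteq{\uparrow}S=\C$ automatic all match the paper.
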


\begin{proof}
Suppose first that 1) holds. We can safely assume that $S$ includes the minimum element of $\C$, so that $\supp(M)\subseteq {\uparrow}S=\C$. This will not affect the sets $\hat{S}$ or $\tilde{S}$, nor the functor $\alpha$. Therefore $M$ is $S$-determined. We have proved in \cite{Hyry}*{p.~25, Cor.~4.13} that an $S$-determined module is $\hat{\hat{S}}$-presented. Lemma \ref{hattu_pois} now tells us that $M$ is $\hat{S}$-presented. So $M\cong \ind_{\hat{S}}\res_{\hat{S}}M$. This implies that for $c\in \C$,
\[
(\res_{\alpha}\res_{\tilde{S}}M)(c) = M(\alpha(c)) \cong \underset{d\leq \alpha(c), \ d\in \hat{S}}\colim M(d).
\]
Furthermore, by Lemma \ref{hattu_alajoukko}, we get
\[
\underset{d\leq \alpha(c), \ d\in \hat{S}}\colim M(d) = \underset{d\leq c, \ d\in \hat{S}}\colim M(d) = (\ind_{\hat{S}} \res_{\hat{S}} M)(c) \cong M(c).
\]

If 2) holds, we immediately see that the functor $\alpha$ with the $R\tilde{S}$-module $\res_{\tilde{S}}M$ is an encoding of $M$.

Finally, suppose that 3) is true. Assume that $c\leq d$ and $S\cap{\downarrow}c=S\cap{\downarrow}d$. We need to show that $M(c\leq d)$ is an isomorphism. Since $\alpha$ is an encoding of $M$, there exists an $R\tilde{S}$-module $N$ such that $\res_{\alpha}N\cong M$. Here $\res_{\alpha}N(c\leq d)$ is the morphism $N(\alpha(c)\leq \alpha(d))$. We note that
\[
\alpha(c)=\mub(S\cap{\downarrow}c)=\mub(S\cap{\downarrow}d)=\alpha(d),
\]
so the morphism $\res_{\alpha}N(c\leq d)$ is an isomorphism. Thus $M(c\leq d)$ is an isomorphism. Therefore 1) holds true.
\end{proof}


\section{Adding infinitary points}\label{fin_det_mod}

One approach to understand $R\mathbb Z^n$-modules better is to expand the set $\mathbb Z^n$ to include points at infinity. This idea has been utilized by Perling in \cite{Perling}. Set $\overline{\mathbb Z}:= \mathbb Z \cup \{-\infty\}$. It is easy to see that $\overline{\mathbb Z}^n$ inherits a poset structure from $\mathbb Z^n$. Any $R\mathbb Z^n$-module $M$ can be naturally extended to an $R \overline{\mathbb Z}^n$-module $\overline{M}$ by setting 
\[
\overline{M}(c) = \lim_{d\geq c, \ d\in \mathbb Z^n}M(d)
\]
 for all $c\in \overline{\mathbb Z}^n$. More formally, this is the coinduction of $M$ with respect to the inclusion $\mathbb Z^n\rightarrow \overline{\mathbb Z}^n$. The functor $M\mapsto \overline{M}$ establishes an equivalence of categories between the category $R\mathbb Z^n\text{-}\textbf{Mod}$ and its essential image in $R\overline{\mathbb Z}^n\text{-}\textbf{Mod}$.

Let $S\subseteq \overline{\mathbb Z}^n$ be a finite non-empty subset. We denote by $\mlb(S)$ the (unique) maximal lower bound of $S$. In this section, we will define a morphism $\beta$ ``dual'' to $\alpha$. The idea is to map an element to the maximal lower bound of the elements of $S$ above it. The morphism $\beta$ will play a crucial role in the proof of our main result, Theorem \ref{aar_maar2}. 

We restrict ourselves to \emph{cartesian} subsets of $\overline{\mathbb Z}^n$, i.e.~subsets of the form $S=S_1\times\cdots\times S_n$, where $S_1,\ldots,S_n$ are subsets of $\overline{\mathbb Z}$. In this situation, we can calculate $\alpha$ and $\beta$ coordinatewise. We begin with the following observation.

\begin{proposition}\label{karteesi_mub}
Let $p_i\colon \overline{\mathbb Z}^n\rightarrow \overline{\mathbb Z}$ be the canonical projection for every $i\in \{1,\ldots,n\}$, and let $S\subseteq \overline{\mathbb Z}^n$ be a finite non-empty subset. Then
\begin{itemize}
\item[1)] $\mub(S)=(\max(p_1(S)),\ldots,\max(p_n(S)))$;
\item[2)] $\mlb(S)=(\min(p_1(S)),\ldots,\min(p_n(S)))$.
\end{itemize}
\end{proposition}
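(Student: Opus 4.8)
The plan is to reduce everything to the observation that the order on $\overline{\mathbb Z}^n$ is the coordinatewise (product) order inherited from the chain $\overline{\mathbb Z}$, so that questions about upper and lower bounds can be settled one coordinate at a time. Note first that since $\overline{\mathbb Z}$ is totally ordered and $S$ is finite and non-empty, each projection $p_i(S)\subseteq\overline{\mathbb Z}$ is a finite non-empty chain and hence has a genuine maximum $\max(p_i(S))$ and minimum $\min(p_i(S))$; thus the two tuples on the right-hand sides of 1) and 2) are well-defined elements of $\overline{\mathbb Z}^n$.

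For 1), write $m:=(\max(p_1(S)),\ldots,\max(p_n(S)))$. First I would check that $m$ is an upper bound of $S$: given $s\in S$ and $i\in\{1,\ldots,n\}$ we have $p_i(s)\leq\max(p_i(S))=p_i(m)$, and since this holds for every $i$ it gives $s\leq m$ in the product order. Next I would show that $m$ is in fact the \emph{least} upper bound. If $u\in\overline{\mathbb Z}^n$ is any upper bound of $S$, then for each $i$ we have $p_i(s)\leq p_i(u)$ for all $s\in S$, so $p_i(u)$ is an upper bound of the chain $p_i(S)$ and therefore $p_i(m)=\max(p_i(S))\leq p_i(u)$; as this holds for every $i$ we conclude $m\leq u$. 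Hence $m$ is the minimum of the (non-empty) set of upper bounds of $S$, so it is the unique minimal upper bound, that is, $\mub(S)=m$. In particular this re-confirms that $\overline{\mathbb Z}^n$ is strongly bounded from above.

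For 2), the argument is completely dual: setting $\ell:=(\min(p_1(S)),\ldots,\min(p_n(S)))$, the same coordinatewise reasoning shows that $\ell$ is a lower bound of $S$ and that every lower bound $v$ of $S$ satisfies $v\leq\ell$, so $\ell$ is the maximum of the set of lower bounds of $S$, and hence $\mlb(S)=\ell$. Here the hypothesis that $S$ be non-empty is exactly what guarantees that each $\min(p_i(S))$ exists in $\overline{\mathbb Z}$.

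I do not expect any genuine obstacle in this proposition; the only point requiring a moment's care is the bookkeeping remark that a minimal upper bound is automatically unique and equal to the least upper bound once one knows that the set of upper bounds has a minimum element — which is precisely what the coordinatewise computation supplies.
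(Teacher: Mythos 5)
Your proof is correct and follows essentially the same coordinatewise strategy as the paper; the only difference is one of direction: the paper starts from $d:=\mub(S)$ and identifies its coordinates via a minimality contradiction, whereas you construct the candidate tuple and verify it is the least upper bound, which has the mild advantage of also establishing existence and uniqueness of $\mub(S)$ rather than presupposing them. Both arguments are sound, and your treatment of 2) as the exact dual matches the paper's.
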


\begin{proof}
Since both 1) and 2) are proved in the same way, we will only present the proof of 1) here. Let $i\in \{1,\ldots,n\}$. The existence of $\max(p_i(S))$ follows from the fact that $p_i(S)$ is non-empty, linearly ordered and finite. Write 
\[
d=(d_1,\ldots,d_n):=\mub(S).
\]
We will show that $d_i=\max(p_i(S))$. First, since $d$ is an upper bound of $S$ and the canonical projection $p_i$ preserves order, we see that $d_i=p_i(d)\geq \max(p_i(S))$.  Secondly, if $\max(p_i(S))<d_i$, then 
\[
d':=(d_1,\ldots,d_{i-1},\max(p_i(S)),d_{i+1},\ldots,d_n)
\] 
is an upper bound of $S$ such that $d'<d$, contradicting the minimality of $d$. Thus $d_i=\max(p_i(S))$.
\end{proof}

Let $S:=S_1\times\cdots\times S_n\subseteq \overline{\mathbb Z}^n$ be a cartesian subset. We write
\[
\overline{S}=\tilde{S_1}\times\cdots\times \tilde{S_n},
\]
where $\widetilde{S_i}=S_i\cup\{-\infty\}\subseteq \overline{\mathbb Z}$ for all $i\in \{1,\ldots,n\}$. Note that if $S$ is finite, then so is $\overline{S}$.

\begin{example}\label{laajennettu_vali}
Let $a\leq b$ in $\mathbb Z^n$. We write $a=(a_1,\ldots,a_n)$ and $b=(b_1,\ldots,b_n)$. For the closed interval
\begin{align*}
[a,b]&=\{c\in \mathbb Z^n\mid a\leq c\leq b\}=[a_1,b_1]\times\cdots\times [a_n,b_n],
\end{align*}
we have
\begin{align*}
\overline{[a,b]}&=\widetilde{[a_1,b_1]}\times\cdots\times\widetilde{[a_n,b_n]}\\
&=\{(c_1,\ldots,c_n)\mid a_i\leq c_i\leq b_i \ \text{or} \ c_i=-\infty\, \ (i\in \{1,\ldots,n\})\}.
\end{align*}
\end{example}

We now have

\begin{lemma}\label{aar_karteesi}
Let $S:=S_1\times\cdots\times S_n\subseteq \overline{\mathbb Z}^n$ be a finite cartesian subset, and let $T\subseteq S$ be a finite non-empty subset. Then
\begin{itemize}
\item[1)] $\mub(T)\in S$;
\item[2)] $\mlb(T)\in S$;
\item[3)] $\tilde{\overline{S}}=\overline{S}$.
\end{itemize}
\end{lemma}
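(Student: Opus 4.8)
The plan is to prove all three parts by reducing to the one-dimensional situation via Proposition~\ref{karteesi_mub}, exploiting the fact that $S$ is cartesian.

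For parts 1) and 2), write $T = \{t^{(1)}, \ldots, t^{(m)}\} \subseteq S$. Since $T$ is nonempty and finite, Proposition~\ref{karteesi_mub} gives $\mub(T) = (\max p_1(T), \ldots, \max p_n(T))$. For each coordinate $i$, the set $p_i(T)$ is a nonempty finite subset of $S_i$ (because $T \subseteq S = S_1 \times \cdots \times S_n$ forces $p_i(T) \subseteq S_i$), so $\max p_i(T)$ is an element of $p_i(T) \subseteq S_i$. Hence every coordinate of $\mub(T)$ lies in the corresponding $S_i$, which means $\mub(T) \in S_1 \times \cdots \times S_n = S$. The argument for $\mlb(T)$ is identical, using part 2) of Proposition~\ref{karteesi_mub} and $\min p_i(T) \in p_i(T) \subseteq S_i$.

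For part 3), recall $\tilde{\overline{S}} = \hat{\overline{S}} \cup \{\min(\overline{\mathbb Z}^n)\}$, and $\min(\overline{\mathbb Z}^n) = (-\infty, \ldots, -\infty) \in \overline{S}$ since each $\widetilde{S_i}$ contains $-\infty$ by construction. So it suffices to show $\hat{\overline{S}} \subseteq \overline{S}$; the reverse inclusion $\overline{S} \subseteq \tilde{\overline{S}}$ is automatic once we know $\overline{S}$ is closed under $\mub$ of nonempty subsets, since each $s \in \overline{S}$ equals $\mub(\{s\})$. But $\overline{S} = \widetilde{S_1} \times \cdots \times \widetilde{S_n}$ is again a finite cartesian subset of $\overline{\mathbb Z}^n$, so part 1) applied to $\overline{S}$ in place of $S$ shows exactly that $\mub(T') \in \overline{S}$ for every finite nonempty $T' \subseteq \overline{S}$; taking unions over all such $T'$ gives $\hat{\overline{S}} \subseteq \overline{S}$, hence $\tilde{\overline{S}} = \overline{S}$.

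I expect no serious obstacle here: the only point requiring a moment of care is making sure the projection sets $p_i(T)$ genuinely land inside $S_i$, which is immediate from the cartesian product structure, and that $\overline{\mathbb Z}^n$ is strongly bounded from above so that $\mub$ and $\mlb$ of finite subsets exist and are unique — this is used implicitly throughout and follows from $\overline{\mathbb Z}$ being a bounded linearly ordered set. One should also note that $\overline{\mathbb Z}^n$ has a minimum, namely $(-\infty,\ldots,-\infty)$, which justifies the notation $\min(\overline{\mathbb Z}^n)$ and the definition of $\tilde{\overline{S}}$.
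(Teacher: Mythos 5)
Your proof is correct and follows essentially the same route as the paper: parts 1) and 2) via the coordinatewise formulas of Proposition~\ref{karteesi_mub} together with $p_i(T)\subseteq S_i$, and part 3) by applying part 1) to the finite cartesian set $\overline{S}$ and observing that $(-\infty,\ldots,-\infty)\in\overline{S}$. No issues.
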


\begin{proof}
To prove 1), let $p_i$ be the canonical projection $\overline{\mathbb Z}^n\rightarrow \overline{\mathbb Z}$ for all $i\in \{1,\ldots,n\}$. From Proposition \ref{karteesi_mub} 1), we get that
\[
\mub(T)=(\max(p_1(T)),\ldots,\max(p_n(T))).
\]
Thus $\mub(T)\in S$, because $p_i(T)\subseteq p_i(S)=S_i$ for all $i\in \{1,\ldots,n\}$.

Next, the proof for 2) is done in the same way as 1), this time using Proposition \ref{karteesi_mub} 2).

Finally, for 3), we note that $\overline{S}$ is finite and cartesian, so 1) implies $\hat{\overline{S}}=\overline{S}$. Since $\overline{S}$ already contains the minimum element of $\overline{\mathbb Z}^n$, we get
\[
\tilde{\overline{S}}=\hat{\overline{S}}\cup\{(-\infty,\ldots,-\infty)\}
=\overline{S}\cup\{(-\infty,\ldots,-\infty)\}
=\overline{S}.
\]
\end{proof}

Let $S:=S_1\times\cdots\times S_n\subseteq \overline{\mathbb Z}^n$ be a finite cartesian subset. Since $\tilde{\overline{S}}=\overline{S}$ by Lemma \ref{aar_karteesi} 3), we have a poset morphism $\alpha:=\alpha_{\overline{S}}\colon \overline{\mathbb Z}^n\rightarrow \overline{S}$, where
\[
\alpha(c)= \mub(\overline{S}\cap {\downarrow} c)
\]
for all $c\in \overline{\mathbb Z}^n$. By Lemma \ref{aar_karteesi} 2), we now can define a ``dual'' poset morphism $\beta:=\beta_{S}\colon  \overline{S}\rightarrow S$ by setting
\[
\beta(c)=\mlb(S \cap {\uparrow} c)
\]
for all $c\in \overline{S}$. Here the set $S\cap{\uparrow}c$ is always non-empty, because $S$ is final in $\overline{S}$.

We can now give coordinatewise formulas for $\alpha$ and $\beta$.

\begin{proposition}\label{alfabeta_lemma}
We write $\alpha_i:=\alpha_{\overline{S_i}}$ and $\beta_i:=\beta_{S_i}$ for all $i\in\{1,\ldots,n\}$. For $c:=(c_1,\ldots,c_n)\in \overline{\mathbb Z}^n$, we have
\begin{itemize}
\item[1)] $\alpha(c)=(\alpha_1(c_1),\ldots, \alpha_n(c_n))$;
\item[2)] if $c \in \overline{S}$, then $\beta(c)=(\beta_1(c_1),\ldots,\beta_n(c_n))$.
\end{itemize}
\end{proposition}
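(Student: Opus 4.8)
The plan is to reduce everything to the coordinatewise description of minimal upper bounds and maximal lower bounds already established in Proposition~\ref{karteesi_mub}, together with the fact (Lemma~\ref{aar_karteesi}) that $\overline{S}$ is closed under $\mub$ and $\mlb$. The key observation is that for a cartesian subset the ``downset below $c$'' and ``upset above $c$'' decompose coordinatewise: if $S = S_1\times\cdots\times S_n$ and $c = (c_1,\ldots,c_n)$, then
\[
\overline{S}\cap {\downarrow}c = (\widetilde{S_1}\cap{\downarrow}c_1)\times\cdots\times(\widetilde{S_n}\cap{\downarrow}c_n),
\]
and similarly $S\cap{\uparrow}c = (S_1\cap{\uparrow}c_1)\times\cdots\times(S_n\cap{\uparrow}c_n)$ when $c\in\overline{S}$. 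This is because $(d_1,\ldots,d_n)\le (c_1,\ldots,c_n)$ in $\overline{\mathbb Z}^n$ iff $d_i\le c_i$ for all $i$, and membership in a cartesian set is coordinatewise.

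For part~1), I would argue as follows. Fix $c = (c_1,\ldots,c_n)\in\overline{\mathbb Z}^n$. By definition $\alpha(c) = \mub(\overline{S}\cap{\downarrow}c)$. Using the coordinatewise decomposition of $\overline{S}\cap{\downarrow}c$ above, and then applying Proposition~\ref{karteesi_mub}~1), one gets
\[
\alpha(c) = \bigl(\max(p_1(\overline{S}\cap{\downarrow}c)),\ldots,\max(p_n(\overline{S}\cap{\downarrow}c))\bigr).
\]
The $i$-th coordinate here is $\max$ of the projection of the product set, which is exactly $\max(\widetilde{S_i}\cap{\downarrow}c_i)$ — this uses that the product set is non-empty in every factor, which holds because $-\infty\in\widetilde{S_i}$, so each factor $\widetilde{S_i}\cap{\downarrow}c_i$ is non-empty. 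But $\max(\widetilde{S_i}\cap{\downarrow}c_i) = \mub(\widetilde{S_i}\cap{\downarrow}c_i) = \alpha_{\overline{S_i}}(c_i) = \alpha_i(c_i)$, since in the one-dimensional poset $\overline{\mathbb Z}$ the minimal upper bound of a non-empty finite set is just its maximum, and since $\widetilde{S_i} = \overline{S_i}$ (the one-variable instance of Lemma~\ref{aar_karteesi}~3)). This gives $\alpha(c) = (\alpha_1(c_1),\ldots,\alpha_n(c_n))$.

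Part~2) is the dual argument. For $c\in\overline{S}$, the set $S\cap{\uparrow}c$ is non-empty (as noted in the text, since $S$ is final in $\overline{S}$), and it decomposes as the product of the $S_i\cap{\uparrow}c_i$, each of which is non-empty and finite. Applying Proposition~\ref{karteesi_mub}~2) together with the fact that $\mlb$ of a non-empty finite subset of $\overline{\mathbb Z}$ is its minimum yields $\beta(c) = (\min(S_1\cap{\uparrow}c_1),\ldots,\min(S_n\cap{\uparrow}c_n)) = (\beta_1(c_1),\ldots,\beta_n(c_n))$. The one subtlety worth spelling out is well-definedness: for the right-hand side of~2) to make sense we need each $c_i\in\overline{S_i}$ so that $\beta_i(c_i)$ is defined, and this is immediate since $c\in\overline{S} = \widetilde{S_1}\times\cdots\times\widetilde{S_n}$.

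I do not expect a real obstacle here; the statement is essentially bookkeeping. The only point requiring a little care is verifying non-emptiness of each coordinate factor (so that the coordinatewise $\max$/$\min$ exist and so that $\max/\min$ of a projection of a product equals $\max/\min$ of the corresponding factor) — for $\alpha$ this is guaranteed by the presence of $-\infty$ in each $\widetilde{S_i}$, and for $\beta$ by finality of $S$ in $\overline{S}$, which in coordinates says $S_i$ is final in $\widetilde{S_i}$, i.e.\ every element of $\widetilde{S_i}$ has something of $S_i$ above it.
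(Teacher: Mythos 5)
Your proof is correct and follows essentially the same route as the paper: both reduce the claim to the coordinatewise formula for $\mub$/$\mlb$ in Proposition~\ref{karteesi_mub} by identifying $p_i(\overline{S}\cap{\downarrow}c)$ with $\overline{S_i}\cap{\downarrow}c_i$ (and dually for ${\uparrow}c$), the only cosmetic difference being that you establish the full product decomposition of the intersection while the paper verifies the projection equality directly by exhibiting preimages. Your attention to the non-emptiness of each coordinate factor covers the same point the paper handles with its explicit elements $d'$.
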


\begin{proof}
To prove 1), we will first show that 
\[
p_i(\overline{S}\cap{\downarrow}c)=\overline{S_i}\cap{\downarrow}c_i,
\]
 where $p_i\colon \overline{\mathbb Z}^n\rightarrow \overline{\mathbb Z}$ is the canonical projection for all $i\in\{1,\ldots,n\}$. Since $p_i(\overline{S})=\overline{S_i}$ and $p_i({\downarrow}c)={\downarrow}c_i$, we see that $p_i(\overline{S}\cap{\downarrow}c)\subseteq \overline{S_i}\cap{\downarrow}c_i$. For the other direction, suppose that $d\in \overline{S_i}\cap {\downarrow}c_i$. Then $d\leq c_i$, so we have an element
\[
d':=(-\infty,\ldots,-\infty,d,-\infty,\ldots,-\infty) \in \overline{S}\cap{\downarrow}c
\]
such that $p_i(d')=d$. Hence $p_i(\overline{S}\cap{\downarrow}c)=\overline{S_i}\cap{\downarrow}c_i$. Now, using this result and Proposition \ref{karteesi_mub} 1), we get
\begin{align*}
\alpha(c) &= \mub(\overline{S}\cap{\downarrow} c)\\
&=(\max(\overline{S_1}\cap{\downarrow}c_1),\ldots,\max(\overline{S_n}\cap{\downarrow}c_n))\\
&=(\alpha_1(c_1),\ldots,\alpha_n(c_n)).
\end{align*}

For 2), the proof is similar. Let $c\in \overline{S}$. We will first show that 
\[
p_i(S\cap{\uparrow}c)= S_i\cap{\uparrow}c.
\]
From $p_i(S)=S_i$ and $p_i({\uparrow}c)={\uparrow}c_i$, we see that $p_i(S\cap{\uparrow}c)\subseteq S_i\cap{\uparrow}c_i$. Next, suppose that $d\in S_i\cap{\uparrow}c_i$. Since $c\in \overline{S}$, there is an element $s:=(s_1,\ldots,s_n)\in S$ such that $s\geq c$. Because $d\geq c_i$ and $S$ is cartesian, we again have an element
\[
d':=(s_1,\ldots,s_{i-1},d,s_{i+1},\ldots,s_n)\in S\cap{\uparrow}c
\] 
such that $p_i(d')=d$. Thus $p_i(S\cap{\uparrow}c)= S_i\cap{\uparrow}c$. To finish the proof, we use Proposition \ref{karteesi_mub} 2):
\begin{align*}
\beta(c) &= \mlb(S\cap{\uparrow} c)\\
&=(\min(S_1\cap{\uparrow}c_1),\ldots,\min(S_n\cap{\uparrow}c_n))\\
&=(\beta_1(c_1),\ldots,\beta_n(c_n)).
\end{align*}
\end{proof}

We note that $\alpha$ and $\beta\circ\alpha$ are ``continuous'' in the following sense.

\begin{proposition}\label{jatkuvuus}
Let $c:=(c_1,\ldots,c_n)\in \overline{\mathbb Z}^n$.
\begin{itemize}
\item[1)] If $N$ is an $R\overline{S}$-module, then 
 \[
\lim_{d\geq c, \ d\in \mathbb Z^n}N(\alpha(d)) \cong N(\alpha(c)).
\]
\item[2)] If $Q$ is an $RS$-module, then 
\[
\lim_{d\geq c, \ d\in \mathbb Z^n}Q((\beta\circ\alpha)(d)) \cong Q((\beta\circ\alpha)(c)).
\]
\end{itemize}
\end{proposition}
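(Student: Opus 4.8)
The plan is to prove 1) first and then deduce 2) as a special case: applying 1) to the $R\overline{S}$-module $N:=\res_{\beta}Q$ yields exactly 2), since $(\res_{\beta}Q)(\alpha(d))=Q(\beta(\alpha(d)))=Q((\beta\circ\alpha)(d))$ for every $d\in\overline{\mathbb Z}^n$. So it suffices to treat 1).

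Fix $c=(c_1,\ldots,c_n)\in\overline{\mathbb Z}^n$ and write $m:=\alpha(c)\in\overline{S}$. The limit in question is taken over the directed poset $P:=\{d\in\mathbb Z^n\mid d\geq c\}$ (a full subposet of $\mathbb Z^n$), and I would compute it by locating inside $P$ a subposet that is ``initial'' — so that the limit is unchanged when restricted to it — and on which $N\circ\alpha$ is constant. The natural candidate is
\[
F:=\{d\in P\mid\alpha(d)=m\}.
\]
First I would use the coordinatewise formula of Proposition~\ref{alfabeta_lemma}~1), together with the fact that each $\alpha_i\colon\overline{\mathbb Z}\to\overline{S_i}$ is an order-preserving map into a finite set, to rewrite $F=J_1\times\cdots\times J_n$, where $J_i:=\{d_i\in\mathbb Z\mid d_i\geq c_i,\ \alpha_i(d_i)=\alpha_i(c_i)\}$ is a convex subset of $\mathbb Z$. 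Each $J_i$ is non-empty: one may take $d_i=c_i$ when $c_i\in\mathbb Z$, and any sufficiently small integer when $c_i=-\infty$ (in which case $\alpha_i(c_i)=-\infty$, and $\alpha_i$ sends every small enough integer to $-\infty$ because $\overline{S_i}$ is finite). Consequently $F$ is non-empty and connected, and $N\circ\alpha$ restricted to $F$ is the constant functor with value $N(m)$.

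The key step is to show that $F$ is an initial subcategory of $P$, i.e.\ that $\{f\in F\mid f\leq d\}$ is non-empty and connected for every $d=(d_1,\ldots,d_n)\in P$. By the product description this set equals $\prod_{i=1}^n\{d'_i\in J_i\mid d'_i\leq d_i\}$, and each factor is a convex subset of $\mathbb Z$ which is non-empty because it contains an integer $x_i$ with $c_i\leq x_i\leq d_i$ and $\alpha_i(x_i)=\alpha_i(c_i)$ — take $x_i=c_i$ if $c_i\in\mathbb Z$, and $x_i=\min(d_i,\ell)$ for any integer $\ell$ with $\alpha_i(\ell)=-\infty$ if $c_i=-\infty$. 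Since a finite product of non-empty convex subsets of $\mathbb Z$ is non-empty and connected, $F$ is initial in $P$. Then, invoking that a limit is unchanged upon restriction along an initial subcategory (the dual of the usual statement about cofinal subcategories and colimits) and that the limit of a constant functor over a non-empty connected category is its value, I would conclude
\[
\lim_{d\geq c,\ d\in\mathbb Z^n}N(\alpha(d))=\lim_{d\in P}N(\alpha(d))\cong\lim_{f\in F}N(\alpha(f))=\lim_{f\in F}N(m)\cong N(m)=N(\alpha(c)),
\]
which is 1); statement 2) then follows as explained above.

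I expect the main obstacle to be the verification that $F$ is initial in $P$ — concretely, showing that each set $\{d'_i\in J_i\mid d'_i\leq d_i\}$ is non-empty. This is where the only real care is needed, namely in separating the case $c_i=-\infty$ (in which $J_i$ is unbounded below) from the case $c_i\in\mathbb Z$ (in which the level sets of $\alpha_i$ can be finite). Everything else — the reduction of 2) to 1), the product decomposition of $F$, and the appeal to initial functors and constant diagrams — should be routine given Proposition~\ref{alfabeta_lemma}.
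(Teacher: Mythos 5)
Your proof is correct and follows essentially the same route as the paper: reduce 2) to 1) by applying it to $\res_{\beta}Q$, then use the coordinatewise formula of Proposition~\ref{alfabeta_lemma} to exhibit an initial subposet of $\{d\in\mathbb Z^n\mid d\geq c\}$ on which $N\circ\alpha$ is constant and invoke initiality plus connectedness. The only (cosmetic) difference is that you restrict to the full level set $\{d\geq c\mid \alpha(d)=\alpha(c)\}$, whereas the paper constructs a single $c'\in\mathbb Z^n$ with $c\leq c'$ and $\alpha(c')=\alpha(c)$ and restricts to the finite interval $\{d\in\mathbb Z^n\mid c\leq d\leq c'\}$; both serve the same purpose.
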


\begin{proof}
For 1), suppose that $N$ is an $R\overline{S}$-module. Let $c':=(c'_1,\ldots,c'_n)\in \overline{\mathbb Z}^n$ as follows: For any $i\in \{1,\ldots, n\}$, we set $a_i=\min(S_i\cap \mathbb Z)$, if it exists, and
\[
c'_i:=\left\{\begin{aligned}&\max(c_i,0), \ \text{if} \ S_i\cap \mathbb Z=\emptyset;\\
&\max(c_i, a_i-1), \ \text{otherwise}.
\end{aligned}\right.
\]
This guarantees that we always have $c\leq c'$ and $c'\in \mathbb Z^n$. With the notation from Proposition \ref{alfabeta_lemma}, we may write 
\[
\alpha(c) = (\alpha_1(c_1),\ldots,\alpha_n(c_n)).
\]
 Let $i\in \{1,\ldots,n\}$. If $S_i\cap \mathbb Z=\emptyset$, then $\alpha_i(c'_i)=-\infty=\alpha_i(c_i)$. Similarly, if $c_i'=a_i-1$, then $\alpha_i(c_i')=-\infty=\alpha_i(c_i)$. Thus $\alpha(c)=\alpha(c')$ in all cases. Since $\alpha$ is a poset morphism, we see that for all $d\in \mathbb Z^n$ such that $c\leq d\leq c'$,
\[
\alpha(c)=\alpha(d)= \alpha(c'),
\]
and therefore
\[
N(\alpha(c))=N(\alpha(d))=N(\alpha(c')).
\]
Furthermore, because the set $\{d\in \mathbb Z^n \mid c\leq d\leq c'\}$ is an initial subset of the set $\{d\in \mathbb Z^n\mid c\leq d\}$, we have
\[
\lim_{d\geq c, \ d\in \mathbb Z^n}N(\alpha(d))\cong \lim_{c\leq d\leq c', \ d\in \mathbb Z^n}N(\alpha(d)) \cong N(\alpha(c)).
\]

Next, for 2), let $Q$ be an $RS$-module. Now $\res_{\beta}Q$ is an $R\overline{S}$-module, so by 1), we have
\[
\lim_{d\geq c, \ d\in \mathbb Z^n}(\res_{\beta}Q)(\alpha(d))\cong (\res_{\beta}Q)(\alpha(c)).
\]
On the other hand, by definition, for all $e\in \overline{\mathbb Z}^n$,
\[
(\res_{\beta}Q)(\alpha(e))= Q(\beta(\alpha(e)))= Q((\beta\circ\alpha)(e)).
\]
This means that we may write the above isomorphism as
\[
\lim_{d\geq c, \ d\in \mathbb Z^n}Q((\beta\circ\alpha)(d))\cong Q((\beta\circ\alpha)(c)).
\]
\end{proof}

\begin{corollary}\label{jatkuvuus_koro}
Let $N$ be an $R\overline{\mathbb Z}^n$-module, and let $c\in \overline{\mathbb{Z}}^n$. Then
\begin{itemize}
\item[1)] $\displaystyle \lim_{d\geq c, \ d\in \mathbb Z^n}N(\alpha(d)) \cong N(\alpha(c))$;
\item[2)] $\displaystyle \lim_{d\geq c, \ d\in \mathbb Z^n}N((\beta\circ\alpha)(d)) \cong N((\beta\circ\alpha)(c))$.
\end{itemize}
\end{corollary}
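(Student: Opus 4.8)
The plan is to reduce both statements directly to Proposition \ref{jatkuvuus} by restricting the given module $N$ to the appropriate finite full subposet of $\overline{\mathbb Z}^n$. The key observation is that $\alpha$ takes values in $\overline{S}\subseteq \overline{\mathbb Z}^n$ and $\beta\circ\alpha$ takes values in $S\subseteq \overline{\mathbb Z}^n$, so the diagrams underlying the two limits only involve the values of $N$ on $\overline{S}$, respectively on $S$.

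For part 1), I would put $N':=\res_{\overline{S}}N$, which is an $R\overline{S}$-module. Since $\overline{S}$ is a full subposet of $\overline{\mathbb Z}^n$, we have $N'(x)=N(x)$ for every $x\in\overline{S}$, and likewise the transition morphisms of $N'$ are those of $N$; in particular $N'(\alpha(d))=N(\alpha(d))$ for all $d\in\overline{\mathbb Z}^n$, so the functor $d\mapsto N'(\alpha(d))$ on $\{d\in\mathbb Z^n\mid d\geq c\}$ is literally the functor $d\mapsto N(\alpha(d))$, and hence they have the same limit. Applying Proposition \ref{jatkuvuus} 1) to $N'$ then yields $\lim_{d\geq c,\ d\in\mathbb Z^n}N(\alpha(d))\cong N'(\alpha(c))=N(\alpha(c))$.

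For part 2), I would argue in the same way with $N'':=\res_{S}N$, which is an $RS$-module, using that $(\beta\circ\alpha)(d)\in S$ for every $d\in\overline{\mathbb Z}^n$ and that $N''$ agrees with $N$ on $S$. Applying Proposition \ref{jatkuvuus} 2) to $N''$ then gives $\lim_{d\geq c,\ d\in\mathbb Z^n}N((\beta\circ\alpha)(d))\cong N((\beta\circ\alpha)(c))$.

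There is essentially no obstacle here; the only point requiring a word of justification is that passing to a full subposet does not alter the relevant limit diagram, which is immediate from the definition of the restriction functor. Thus the corollary is a formal consequence of Proposition \ref{jatkuvuus}, the content of that proposition being precisely the case where $N$ is already defined only on $\overline{S}$ (resp. $S$).
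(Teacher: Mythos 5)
Your argument is exactly the paper's: restrict $N$ to $\overline{S}$ (resp.\ $S$) and apply Proposition \ref{jatkuvuus} 1) (resp.\ 2)), noting that restriction along a full subposet does not change the values or transition maps appearing in the limit diagram. This is correct and matches the published proof.
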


\begin{proof}
For 1), we note that $\res_{\overline{S}}N$ is an $R\overline{S}$-module, where $(\res_{\overline{S}}N)(d)=N(d)$ for all $d\in\overline{S}$. We may then apply Proposition \ref{jatkuvuus} 1) to get the result. For 2), we use Proposition \ref{jatkuvuus} 2) on the $RS$-module $\res_{S}N$.
\end{proof}


\section{Finitely determined modules}

Let $M$ be an $R\C$-module. We say that $M$ is \emph{pointwise finitely presented} if $M(c)$ is finitely presented for all $c\in \C$. Slightly generalizing the definition of Miller in \cite{Miller}*{p.~25, Ex.~4.5}, where $R=k$ is a field, we say that an $R\mathbb Z^n$-module $M$ is \emph{finitely determined}, if $M$ is pointwise finitely presented, and for some $a\leq b$ in $\mathbb Z^n$, the convex projection $\pi\colon \mathbb Z^n\rightarrow [a,b]$ gives $M$ an encoding by the closed interval $[a,b]\subseteq \mathbb Z^n$. Here the convex projection $\pi$ takes every point in $\mathbb Z^n$ to its closest point in the interval $[a,b]$. If $a=(a_1,\ldots,a_n)$ and $b=(b_1,\ldots,b_n)$, we have for any $c:=(c_1,\ldots,c_n)\in \mathbb Z^n$,
\[
\pi(c) = (\pi_1(c_1),\ldots,\pi_n(c_n)),
\]
where
\[
\pi_i(c_i)=\max(a_i,\min(c_i,b_i))
\]
for all $i\in\{1,\ldots,n\}$. Note that a pointwise finitely presented $R\mathbb Z^n$-module $M$ is finitely determined if and only if there exists a closed interval $[a,b]\subseteq \mathbb Z^n$ such that the morphisms $M(c\leq c+e_i)$ ($i=1,\ldots,n$) are isomorphisms whenever $c_i$ lies outside $[a_i,b_i]$.

\begin{remark}\label{aar_maar_kuvailu}
Let $M$ be an $R\mathbb Z^n$-module. Then $M$ is encoded by the closed interval $[a,b]$ with the convex projection $\pi\colon \mathbb Z^n\rightarrow [a,b]$ if and only if $M\cong \res_{\pi}\res_{[a,b]} M$. Indeed, if $M\cong \res_{\pi}N$ for some $R[a,b]$-module $N$, then for all $c\in \mathbb Z^n$, we have
\[
M(c)\cong (\res_{\pi}N)(c) = N(\pi(c))=N(\pi(\pi(c)))\cong M(\pi(c))
\]
because for all $c\in \mathbb Z^n$, $\pi(\pi(c))=\pi(c)$.
\end{remark}

We would now like to investigate how the notion of finite determinacy relates to our notion of $S$-determinacy, when $S$ is finite and $M$ is pointwise finitely presented. While the requirement that $\supp(M)\subseteq {\uparrow}S$ does not necessarily hold for finitely determined modules, we do have the following:

\begin{proposition}\label{aar_maar}
Let $M$ be an $R\mathbb Z^n$-module, and $a,b\in \mathbb Z^n$ such that $a\leq b$. Set $u:=(1,1,\ldots,1)\in \mathbb Z^n$.
If $M$ is $[a+u,b]$-determined, then $M$ has an encoding by the closed interval $[a,b]$ with the convex projection $\pi\colon \mathbb Z^n\rightarrow [a,b]$. The converse implication holds if $\supp(M)\subseteq {\uparrow}a$.
\end{proposition}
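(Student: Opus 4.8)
The plan is to translate both implications into one pointwise statement: that $M$ is constant along the fibres of the convex projection $\pi$. The first thing I would record is the characterization that \emph{$M$ is encoded by $[a,b]$ with $\pi$ if and only if $M(c\le d)$ is an isomorphism for all $c\le d$ in $\mathbb Z^n$ with $\pi(c)=\pi(d)$}. One direction is immediate from Remark~\ref{aar_maar_kuvailu}: if $M\cong\res_\pi\res_{[a,b]}M$ then, whenever $\pi(c)=\pi(d)$, the morphism $\pi(c)\le\pi(d)$ in $[a,b]$ is an identity, so $M(c\le d)$ is isomorphic to an identity. Conversely, $c$ and $\pi(c)$ both lie below $c\vee\pi(c)$, and a coordinatewise check gives $\pi(c\vee\pi(c))=\pi(c)$; the hypothesis then makes $M(c\le c\vee\pi(c))$ and $M(\pi(c)\le c\vee\pi(c))$ isomorphisms, hence $M(c)\cong M(\pi(c))$ naturally in $c$, i.e.\ $M\cong\res_\pi\res_{[a,b]}M$ (naturality being routine). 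I would also dispose of the degenerate case: if $a_i=b_i$ for some $i$ then $[a+u,b]=\emptyset$, which forces $M=0$ in either direction, so I may assume $a_i<b_i$ for all $i$. Then $a+u\le b$, the box $[a+u,b]$ is nonempty with least element $a+u$, $\pi_i(x)=a_i$ exactly when $x\le a_i$, and $\pi_i(x)=\min(x,b_i)$ whenever $x\ge a_i$.

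For the forward implication I would assume $M$ is $[a+u,b]$-determined, so that $\supp(M)\subseteq{\uparrow}(a+u)$ and $M(c\le d)$ is an isomorphism whenever $[a+u,b]\cap{\downarrow}c=[a+u,b]\cap{\downarrow}d$; I must check the characterization above. Take $c\le d$ with $\pi(c)=\pi(d)$. If $c\not\ge a+u$, pick $i$ with $c_i\le a_i$; then $\pi_i(c_i)=a_i=\pi_i(d_i)$ forces $d_i\le a_i$, so $d\not\ge a+u$ and $M(c)=M(d)=0$. If $c\ge a+u$ (hence $d\ge a+u$), then $c_i,d_i>a_i$ for all $i$, so $\pi(c)=\pi(d)$ yields $\min(c_i,b_i)=\min(d_i,b_i)$ for all $i$, i.e.\ $b\wedge c=b\wedge d$ for the coordinatewise minimum; hence $[a+u,b]\cap{\downarrow}c=[a+u,\,b\wedge c]=[a+u,\,b\wedge d]=[a+u,b]\cap{\downarrow}d$ and $M(c\le d)$ is an isomorphism by $[a+u,b]$-determinacy.

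For the converse I would assume $\supp(M)\subseteq{\uparrow}a$ and the characterization, and the first (and most delicate) step is to upgrade the support bound to $\supp(M)\subseteq{\uparrow}(a+u)$. Indeed, if some $c_i<a_i$ then $M(c)=0$ already; and if $c_i=a_i$, let $d$ agree with $c$ except that $d_i=a_i-1$, so $d\le c$, $M(d)=0$, and $\pi(d)=\pi(c)$ since $\pi_i(a_i-1)=a_i=\pi_i(a_i)$, whence $M(d\le c)$ is an isomorphism and $M(c)=0$. Thus $\supp(M)\subseteq{\uparrow}(a+u)={\uparrow}[a+u,b]$. For the isomorphism condition, take $c\le d$ with $[a+u,b]\cap{\downarrow}c=[a+u,b]\cap{\downarrow}d$. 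If $c\not\ge a+u$, then $[a+u,b]\cap{\downarrow}c=\emptyset$ (in the bad coordinate, $x_i\le c_i\le a_i<(a+u)_i\le x_i$), so $[a+u,b]\cap{\downarrow}d=\emptyset$, which since $a+u\le b$ forces $d\not\ge a+u$; then $M(c)=M(d)=0$ by the previous step. If $c\ge a+u$ (hence $d\ge a+u$), the two sets are $[a+u,\,b\wedge c]$ and $[a+u,\,b\wedge d]$, nonempty with greatest elements $b\wedge c$ and $b\wedge d$, so $b\wedge c=b\wedge d$; since $c_i,d_i>a_i$ for all $i$, this gives $\pi_i(c_i)=\min(c_i,b_i)=\min(d_i,b_i)=\pi_i(d_i)$, i.e.\ $\pi(c)=\pi(d)$, and $M(c\le d)$ is an isomorphism. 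Hence $M$ is $[a+u,b]$-determined.

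I expect the support upgrade $\supp(M)\subseteq{\uparrow}a\Rightarrow\supp(M)\subseteq{\uparrow}(a+u)$ in the converse to be the only step with real content: it is exactly what explains the ``$+u$'' shift, and the only point where the encoding hypothesis interacts with the support hypothesis rather than with pure poset combinatorics. The remainder is bookkeeping with the coordinatewise formula for $\pi$ and with down-sets of boxes, the recurring identity being $[a+u,b]\cap{\downarrow}c=[a+u,\,b\wedge c]$, which lets the two conditions be compared coordinate by coordinate.
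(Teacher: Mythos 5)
Your proof is correct and follows essentially the same route as the paper's: the forward direction compares ${\downarrow}c\cap[a+u,b]$ along $\pi$-fibres, and the converse hinges on the same support upgrade from ${\uparrow}a$ to ${\uparrow}(a+u)$ via $\pi_i(a_i-1)=\pi_i(a_i)$. You are somewhat more explicit than the paper (the fibrewise characterization of the encoding via $c\vee\pi(c)$, and the degenerate case $a_i=b_i$, which the paper passes over silently), but these are refinements of the same argument rather than a different one.
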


\begin{proof}
For the first implication, suppose that $M$ is $[a+u,b]$-determined. We write $a=(a_1,\ldots,a_n)$ and $b=(b_1,\ldots,b_n)$. Let $c:=(c_1,\ldots,c_n)\in \mathbb Z^n$.  We note that if $c_i\leq a_i$ for some $i\in \{1,\ldots,n\}$, then also $\pi_i(c_i)\leq a_i$, so that $c,\pi(c)\notin \supp(M)$. Otherwise $c> a$, in which case $\pi(c)\leq c$ and $[a+u,b]\cap {\downarrow}\pi(c)=[a+u,b]\cap {\downarrow} c$.
Thus $M(\pi(c))\rightarrow M(c)$ is an isomorphism by the definition of $[a+u,b]$-determined modules, and $M\cong \res_{\pi}\res_{[a,b]}M$.

To prove the converse, assume that $\supp(M)\subseteq {\uparrow}a$ and $M$ has an encoding by the closed interval $[a,b]$ with the encoding convex projection $\pi\colon \mathbb Z^n\rightarrow [a,b]$. Let $c:=(c_1,\ldots,c_n)\in \mathbb Z^n$. Suppose that $c_i< a_i$ for some $i\in\{1,\ldots,n\}$. From the condition $\supp(M)\subseteq {\uparrow}a$, we see that $M(c)=0$. Since $M$ is finitely determined, we also have $M(\pi(c))=M(c)=0$. Thus $M(c)=0$ if $c_i\leq a_i$ for some $i\in \{1,\ldots,n\}$. If this is not the case, we have $c\geq a+u$. Let $c\leq d$ in $\C$ such that $a+u \leq c \leq d$ and $[a+u,b]\cap{\downarrow c}=[a+u,b]\cap{\downarrow d}$. This implies that $\pi(c)=\pi(d)$, so $M(c\leq d)$ is an isomorphism.
\end{proof}

To proceed, we have to shift our focus to $R\overline{\mathbb Z}^n$-modules. Let $a\leq b$ in $\mathbb Z^n$. With the notation from section \ref{fin_det_mod}, we will view the case $S=[a,b]$. In particular, we have $\alpha=\alpha_{\overline{[a,b]}}$ and $\beta=\beta_{[a,b]}$. Proposition \ref{alfabeta_lemma} gives us formulas for $\alpha$ and $\beta$. If $c:=(c_1,\ldots,c_n)\in \overline{\mathbb Z}^n$ and $d:=(d_1,\ldots,d_n)\in \overline{[a,b]}$, then
\[
\alpha(c)=(\alpha_1(c_1),\ldots,\alpha_n(c_n)) \quad \text{and} \quad \beta(d)=(\beta_1(d_1),\ldots,\beta_n(d_n)).
\]
Here $\alpha_i:=\alpha_{\overline{S_i}}$ and $\beta_i:=\beta_{S_i}$ for all $i\in\{1,\ldots,n\}$. Explicitly,
\[
\alpha_i(c_i)=\left\{\begin{aligned} 
-\infty, \ &\text{if} \ c_i<a_i;\\
c_i, \ &\text{if} \ a_i\leq c_i \leq b_i;\\
b_i, \ &\text{if} \ c_i >b_i
\end{aligned}\right.
\quad \text{and} \quad
\beta_i(d_i)=\left\{\begin{aligned} 
a_i, \ &\text{if} \ d_i=-\infty;\\
d_i, \ &\text{otherwise}
\end{aligned}\right.
\]
for every $i\in\{1,\ldots,n\}$.
The next proposition shows us that the composition $\beta\circ \alpha$ is an extension of the convex projection $\pi$ from $\mathbb Z^n$ to $\overline{\mathbb Z}^n$.

\begin{proposition}\label{alfabeta}
Let $\pi\colon \mathbb Z^n\rightarrow [a,b]$ be the convex projection. Then for any $c:=(c_1,\ldots,c_n)\in \mathbb Z^n$,
\[
\pi(c) = (\beta \circ \alpha)(c).
\]
\end{proposition}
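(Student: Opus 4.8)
The plan is to reduce everything to a coordinatewise computation and then check three cases per coordinate. First I would observe that since $\alpha$ maps $\overline{\mathbb Z}^n$ into $\overline{S}=\overline{[a,b]}$, the point $\alpha(c)$ lies in $\overline{[a,b]}$, so both parts of Proposition \ref{alfabeta_lemma} apply: part 1) gives
\[
\alpha(c) = (\alpha_1(c_1),\ldots,\alpha_n(c_n)),
\]
and part 2), applied to the element $\alpha(c)\in\overline{[a,b]}$, gives
\[
(\beta\circ\alpha)(c) = \beta(\alpha(c)) = \bigl(\beta_1(\alpha_1(c_1)),\ldots,\beta_n(\alpha_n(c_n))\bigr).
\]
On the other hand, by definition $\pi(c) = (\pi_1(c_1),\ldots,\pi_n(c_n))$. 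Hence it suffices to prove the one-variable identity $\pi_i(c_i) = \beta_i(\alpha_i(c_i))$ for each $i\in\{1,\ldots,n\}$ and each $c_i\in\mathbb Z$.

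Next I would use the explicit formulas for $\alpha_i$ and $\beta_i$ recorded just before the statement, together with $\pi_i(c_i)=\max(a_i,\min(c_i,b_i))$. Splitting according to the position of $c_i$ relative to $[a_i,b_i]$: if $c_i<a_i$ then $\alpha_i(c_i)=-\infty$, so $\beta_i(\alpha_i(c_i))=a_i$, while $\pi_i(c_i)=\max(a_i,c_i)=a_i$; if $a_i\leq c_i\leq b_i$ then $\alpha_i(c_i)=c_i\neq-\infty$, so $\beta_i(\alpha_i(c_i))=c_i$, while $\pi_i(c_i)=\max(a_i,c_i)=c_i$; and if $c_i>b_i$ then $\alpha_i(c_i)=b_i\neq-\infty$, so $\beta_i(\alpha_i(c_i))=b_i$, while $\pi_i(c_i)=\max(a_i,b_i)=b_i$. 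In all three cases the two sides agree.

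Combining the coordinatewise identity with the product decompositions above yields $\pi(c)=(\beta\circ\alpha)(c)$, as desired. There is no real obstacle here; the only point that needs a word of care is the justification that Proposition \ref{alfabeta_lemma} 2) is applicable, i.e.\ that $\alpha(c)\in\overline{[a,b]}$, which is immediate from the codomain of $\alpha=\alpha_{\overline{[a,b]}}$. The rest is the routine three-case verification sketched above.
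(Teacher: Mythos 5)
Your proposal is correct and follows essentially the same route as the paper's proof: the paper first verifies the three-case identity for $n=1$ and then reduces the general case to it via Proposition \ref{alfabeta_lemma}, while you perform the coordinatewise reduction first and then the same three-case check using the explicit formulas for $\alpha_i$ and $\beta_i$. The extra remark that $\alpha(c)\in\overline{[a,b]}$ justifies applying Proposition \ref{alfabeta_lemma} 2) is a fine point of care that the paper leaves implicit.
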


\begin{proof}
Suppose first that $n=1$. Recall that $\pi(c)=\max(a,\min(c,b))$. Now there are three cases:
\begin{itemize}
\item If $c\in [a,b]$, then $(\beta \circ \alpha)(c)= \beta(c) = c = \pi(c)$;
\item If $c<a$, then $(\beta\circ\alpha)(c)=\beta(-\infty)=a = \pi(c)$;
\item If $c>b$, then $(\beta\circ\alpha)(c)=\beta(b) = b = \pi(c)$.
\end{itemize}

Suppose next that $n>1$. Using Proposition \ref{alfabeta_lemma}, we may write 
\[
\alpha(c) = (\alpha_1(c_1),\ldots,\alpha_n(c_n)) \quad \text{and} \quad \beta(d) = (\beta_1(d_1),\ldots,\beta_n(d_n))
\]
for all $d\in \overline{[a,b]}$. Similarly, recall that
\[
\pi(c)=(\pi_1(c_1),\ldots,\pi_n(c_n)).
\]
It now follows from the case $n=1$ that
\begin{align*}
(\beta\circ \alpha)(c) &=\beta(\alpha_1(c_1),\ldots,\alpha_n(c_n))\\
&= ((\beta_1\circ\alpha_1)(c_1),\ldots,(\beta_n\circ\alpha_n)(c_n))\\
&=(\pi_1(c_1),\ldots,\pi_n(c_n))\\
&=\pi(c).
\end{align*}
\end{proof}

\begin{remark}\label{beta_huomautus}
In an effort to keep the notation simpler, we only defined $\beta$ for the elements in the image of $\alpha$. Of course, we could have defined $\beta$ in a fully dual fashion to $\alpha$, starting from posets that are strongly bounded from below, adding the point $\infty$ to $\mathbb Z$, and defining a set $\underline{S}$ dually to $\overline{S}$. This would have resulted in the situation where 
\[
(\beta|_{\overline{S}}\circ \alpha)(c)=(\alpha|_{\underline{S}}\circ \beta)(c) = \pi(c)
\]
 for all $c\in \mathbb Z^n$. In other words, the same result would have been achieved.
\end{remark}

We saw in Remark \ref{aar_maar_kuvailu} that if $M$ is encoded by a closed interval $[a,b]$ with the convex projection $\pi\colon\mathbb Z^n\rightarrow [a,b]$, we have $M(c)\cong M(\pi(c))$ for all $c\in \mathbb Z^n$. On the other hand, by Theorem \ref{S-det_luo_1}, we have $\overline{M}(\alpha(c))\cong \overline{M}(c)$ for all $c\in \overline{\mathbb Z}^n$ if $M$ is $S$-determined and $S\subseteq \overline{\mathbb Z}^n$ is finite. In preparation for the proof of Theorem \ref{aar_maar2}, we will now show that a similar result applies to $\beta$ in both cases.

\begin{proposition}\label{apuraja}
Set $u:=(1,1,\ldots,1)\in \mathbb Z^n$. Let $M$ be an $R\mathbb Z^n$-module, and let $c\in\overline{[a,b]}$.
\begin{itemize}
\item[1)] If $M$ has an encoding by the closed interval $[a,b]$ with the convex projection $\pi\colon \mathbb Z^n\rightarrow [a,b]$, then $\overline{M}(c)\cong M(\beta(c))$.
\item[2)] If $\overline{M}$ is $\overline{[a+u,b]}$-determined, then $\overline{M}(c)\cong M(\beta(c))$.
\end{itemize}
\end{proposition}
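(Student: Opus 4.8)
The plan is to reduce both statements to facts already established about $\overline M$ and the maps $\alpha$, $\beta$. The key structural observation is that, for $c\in\overline{[a,b]}$, the element $\beta(c)$ lies in $\mathbb Z^n$ (by the explicit coordinatewise formula, $\beta_i(d_i)$ is either $a_i$ or $d_i\in[a_i,b_i]$), so $\overline M(\beta(c))=M(\beta(c))$; thus in both parts it suffices to prove $\overline M(c)\cong\overline M(\beta(c))$. I would handle the two parts in parallel once I have this reduction, since in each case the hypothesis gives me a "collapsing" statement for the relevant comparison maps.

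For part 2), assume $\overline M$ is $\overline{[a+u,b]}$-determined. By Theorem~\ref{S-det_luo_1} applied with $S=\overline{[a+u,b]}$ (a finite subset of $\overline{\mathbb Z}^n$, and note $\min(\overline{\mathbb Z}^n)=(-\infty,\dots,-\infty)\in S$), we get that $T_{\alpha}\colon\res_{\alpha}\res_{\tilde S}\overline M\to\overline M$ is an isomorphism, i.e.\ $\overline M(\alpha'(c)\le c)$ is an isomorphism for all $c$, where $\alpha'=\alpha_{\overline{[a+u,b]}}$. The point is then to compare $\alpha'$ with $\beta$: I would check, coordinatewise using the explicit formulas for $\alpha_i$, $\beta_i$ (with $S=[a,b]$) and the analogous formula for $\alpha'_i$ (with $S=[a+u,b]$), that $\alpha'(\beta(c))=\alpha'(c)$ for $c\in\overline{[a,b]}$, or more directly that $\beta(c)$ and $c$ satisfy $\overline{[a+u,b]}\cap{\downarrow}\beta(c)=\overline{[a+u,b]}\cap{\downarrow}c$. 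Since $\beta(c)\le c$ always, condition 1) of Theorem~\ref{S-det_luo_1} then yields that $\overline M(\beta(c)\le c)$ is an isomorphism, hence $\overline M(c)\cong\overline M(\beta(c))=M(\beta(c))$. The coordinate bookkeeping is where I would take $a+u$ rather than $a$: when $d_i=-\infty$, $\beta_i(d_i)=a_i$, which is \emph{outside} $[a_i+1,b_i]$, so it gets identified with $-\infty$ again by $\alpha'_i$, matching $\alpha'_i(-\infty)$; the shift by $u$ is exactly what makes this work.

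For part 1), assume $M$ is encoded by $[a,b]$ via $\pi$. By Remark~\ref{aar_maar_kuvailu}, $M\cong\res_\pi\res_{[a,b]}M$, so $M(e)\cong M(\pi(e))$ for all $e\in\mathbb Z^n$. By the definition of $\overline M$ and Proposition~\ref{alfabeta} (which says $\beta\circ\alpha$ extends $\pi$), I would compute, for $c\in\overline{[a,b]}$,
\[
\overline M(c)=\lim_{d\ge c,\ d\in\mathbb Z^n}M(d)\cong\lim_{d\ge c,\ d\in\mathbb Z^n}M(\pi(d))=\lim_{d\ge c,\ d\in\mathbb Z^n}M((\beta\circ\alpha)(d)),
\]
and then invoke Corollary~\ref{jatkuvuus_koro} 2) (the "continuity" of $\beta\circ\alpha$, applied to the $R\overline{\mathbb Z}^n$-module $\overline M$) to identify this last limit with $\overline M((\beta\circ\alpha)(c))$. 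It remains to see $(\beta\circ\alpha)(c)=\beta(c)$ for $c\in\overline{[a,b]}$, which is immediate from the coordinatewise formulas since $\alpha_i(c_i)=c_i$ when $a_i\le c_i\le b_i$ and $\alpha_i(-\infty)=-\infty$, i.e.\ $\alpha$ fixes $\overline{[a,b]}$ pointwise. Thus $\overline M(c)\cong\overline M(\beta(c))=M(\beta(c))$.

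The main obstacle I anticipate is not conceptual but the careful coordinatewise verification in part 2) that $\overline{[a+u,b]}\cap{\downarrow}\beta(c)=\overline{[a+u,b]}\cap{\downarrow}c$; one has to treat the three regimes $c_i<a_i+1$, $a_i+1\le c_i\le b_i$, $c_i>b_i$ (and the possibility $c_i=-\infty$) for each coordinate and confirm the downsets agree. Everything else is an assembly of Remark~\ref{aar_maar_kuvailu}, Proposition~\ref{alfabeta}, Corollary~\ref{jatkuvuus_koro}, and Theorem~\ref{S-det_luo_1}, together with the harmless but essential remark that $\beta$ lands in $\mathbb Z^n$ so that $\overline M$ and $M$ agree there.
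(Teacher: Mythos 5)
Your proof is correct and follows essentially the same route as the paper's: part 1) via the limit definition of $\overline M$, Proposition \ref{alfabeta}, Corollary \ref{jatkuvuus_koro}, and the fact that $\alpha$ fixes $\overline{[a,b]}$; part 2) by checking $\overline{[a+u,b]}\cap{\downarrow}c=\overline{[a+u,b]}\cap{\downarrow}\beta(c)$ coordinatewise and invoking determinacy, with the same key observation that $\beta_i(-\infty)=a_i$ lies outside $[a_i+1,b_i]$. One small slip: the inequality is $c\le\beta(c)$, not $\beta(c)\le c$ (since $\beta(c)$ is the greatest lower bound of the elements of $[a,b]$ \emph{above} $c$), but the rest of your argument already implicitly uses the correct direction.
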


\begin{proof}
To show 1), suppose that $M$ has an encoding by the closed interval $[a,b]$ with the convex projection $\pi\colon \mathbb Z^n\rightarrow [a,b]$. Then, by the definition of $\overline{M}$,
\[
\overline{M}(c)=\lim_{d\geq c, \ d\in \mathbb Z^n}M(d).
\]
The encoding gives us $M(d)\cong M(\pi(d))$ for all $d\in\mathbb Z^n$. This implies that 
\[
\overline{M}(c)\cong\lim_{d\geq c, \ d\in \mathbb Z^n}M(\pi(d)).
\]
We may now apply Corollary \ref{jatkuvuus_koro} to see that $\overline{M}(c)\cong M(\beta(\alpha(c)))$. Note that $c\in\overline{[a,b]}$ implies $\alpha(c)=c$. Thus $\overline{M}(c)\cong M(\beta(c))$.

Next, to prove 2), let $\overline{M}$ be $\overline{[a+u,b]}$-determined. Since $c\leq \beta(c)$, it is then enough to show that $\overline{[a+u,b]}\cap{\downarrow}c=\overline{[a+u,b]}\cap{\downarrow}\beta(c)$. We instantly have ${\downarrow}c \subseteq {\downarrow}\beta(c)$. For the other direction, let $d:=(d_1,\ldots,d_n)\in \overline{[a+u,b]}\cap{\downarrow}\beta(c)$. We want to show that $d\leq c$. Recall that we may write $\beta(c)=(\beta_1(c_1),\ldots,\beta_n(c_n))$, where 
\[
\beta_i(c_i)=\left\{\begin{aligned} 
&a_i, \ \text{if} \ c_i=-\infty;\\
&c_i, \ \text{otherwise}.
\end{aligned}\right.
\]
for all $i\in \{1,\ldots,n\}$. Suppose that $i \in \{1,\ldots,n\}$. If $\beta_i(c_i)=c_i$, we have $d_i\leq \beta_i(c_i)=c_i$. Otherwise, if $\beta_i(c_i)=a_i$, we must have $d_i=c_i=-\infty$, because $d_i,c_i\in \overline{[a_i+1,b_i]}$. We conclude that $d\leq c$.
\end{proof}

\begin{remark}\label{aarellisyydet}
Let $M$ be a pointwise finitely presented $R\mathbb Z^n$-module and let $c\in\overline{\mathbb Z}^n$. If $M$ is finitely determined with the convex projection $\pi\colon \mathbb Z^n\rightarrow [a,b]$, then from the proof of Proposition \ref{apuraja}, we have $\overline{M}(c) \cong M((\beta\circ\alpha)(c))$, so that $\overline{M}$ is pointwise finitely presented.
\end{remark}

We are now ready to state 

\begin{theorem}\label{aar_maar2}
Let $M$ be a pointwise finitely presented $R\mathbb Z^n$-module. Then the following are equivalent:
\begin{itemize}
\item[1)] $M$ is finitely determined;
\item[2)] $\overline{M}$ is $S$-determined for some finite $S\subseteq \overline{\mathbb Z}^n$;
\item[3)] $\overline{M}$ is finitely presented.
\end{itemize}
\end{theorem}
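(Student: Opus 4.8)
The plan is to prove the cycle of implications $1)\Rightarrow 2)\Rightarrow 3)\Rightarrow 1)$, exploiting the machinery developed in Sections 2 and 3.

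For $1)\Rightarrow 2)$, suppose $M$ is finitely determined with convex projection $\pi\colon \mathbb Z^n\to[a,b]$. The natural candidate for $S$ is the finite cartesian set $\overline{[a+u,b]}\subseteq\overline{\mathbb Z}^n$. I would show $\overline{M}$ is $\overline{[a+u,b]}$-determined in the sense of condition 1) of Theorem~\ref{S-det_luo_1}. One checks $\supp(\overline{M})\subseteq{\uparrow}\overline{[a+u,b]}$: for $c\in\overline{\mathbb Z}^n$ with some coordinate $c_i\le a_i$, Remark~\ref{aarellisyydet} gives $\overline{M}(c)\cong M((\beta\circ\alpha)(c))$, and $(\beta\circ\alpha)_i(c_i)=a_i$ forces this stalk to be $0$ since $\supp(M)\subseteq{\uparrow}a$ (which we may assume, as the convex projection already kills stalks with a coordinate below $a_i$, so $M$ is supported in ${\uparrow}a$ up to the relevant identifications). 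For the isomorphism condition, given $c\le d$ in $\overline{\mathbb Z}^n$ with $\overline{[a+u,b]}\cap{\downarrow}c=\overline{[a+u,b]}\cap{\downarrow}d$, I would use the coordinatewise description to conclude $(\beta\circ\alpha)(c)=(\beta\circ\alpha)(d)=\pi$-values, then invoke $\overline{M}(c)\cong M((\beta\circ\alpha)(c))$ from Remark~\ref{aarellisyydet} together with naturality to see $\overline{M}(c\le d)$ is an isomorphism.

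For $2)\Rightarrow 3)$, suppose $\overline{M}$ is $S$-determined for a finite $S\subseteq\overline{\mathbb Z}^n$. Since $\overline{\mathbb Z}^n$ is strongly bounded from above (it is a bounded join-semilattice with minimum $(-\infty,\ldots,-\infty)$), Theorem~\ref{S-det_luo_1} applies and gives that $\alpha_S$ is an encoding of $\overline{M}$, with $\overline{M}\cong\res_{\alpha_S}\res_{\tilde S}\overline{M}$; moreover, as noted in the proof of Theorem~\ref{S-det_luo_1} via \cite{Hyry}*{Cor.~4.13} and Lemma~\ref{hattu_pois}, $\overline{M}$ is $\hat S$-presented, so $\mu_{\overline{M}}\colon\ind_{\hat S}\res_{\hat S}\overline{M}\to\overline{M}$ is an isomorphism. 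Since $\hat S$ is a finite subset of $\overline{\mathbb Z}^n$, and $\res_{\hat S}\overline{M}$ is an $R\hat S$-module on a finite poset that is pointwise finitely presented (by Remark~\ref{aarellisyydet}, using that $2)$ together with Proposition~\ref{apuraja} forces the relevant stalks to be stalks of the pointwise finitely presented $M$ — or more directly, each $\overline{M}(s)$ for $s\in\hat S$ is finitely presented over $R$), a standard argument over a finite poset shows $\ind_{\hat S}\res_{\hat S}\overline{M}$ is finitely presented as an $R\overline{\mathbb Z}^n$-module: choose finite $R$-presentations of each $\overline{M}(s)$ and assemble the finitely many free summands $R[\Mor(s,-)]$ and relations. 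Hence $\overline{M}$ is finitely presented.

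For $3)\Rightarrow 1)$, suppose $\overline{M}$ is finitely presented. Then it is finitely generated, so its generators and relations live over some finite set of points $\{s_1,\ldots,s_m,t_1,\ldots,t_k\}\subseteq\overline{\mathbb Z}^n$; choose $a\in\mathbb Z^n$ below all the finite coordinates appearing among these points and $b\in\mathbb Z^n$ above all of them (coordinatewise), which is possible since there are finitely many. One checks that $\overline{M}$ is then $\overline{[a+u,b]}$-determined: outside the box, increasing a coordinate past $b_i$ or (from $-\infty$) up to below $a_i$ changes neither any generating nor any relating degree, so the structure maps $\overline{M}(c\le c+e_i)$ are isomorphisms there — this is the finite-presentation analogue of the standard fact that finitely generated $\mathbb Z^n$-graded modules are ``eventually constant in each direction.'' Now restrict back to $\mathbb Z^n$: since $M\cong\res$ of $\overline{M}$ along $\mathbb Z^n\hookrightarrow\overline{\mathbb Z}^n$ (the coinduction functor is fully faithful, so $M$ is recovered from $\overline{M}$), the isomorphisms $M(c\le c+e_i)$ hold whenever $c_i\notin[a_i,b_i]$, and $M$ is pointwise finitely presented by hypothesis; hence $M$ is finitely determined by the criterion stated just before Remark~\ref{aar_maar_kuvailu}. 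Alternatively, and more cleanly, apply $3)\Rightarrow 2)$ first (finitely presented $\Rightarrow$ $\overline{[a+u,b]}$-determined as above), then use Proposition~\ref{aar_maar} together with $\supp(M)\subseteq{\uparrow}a$ (forced by $S$-determinacy of $\overline{M}$) to get the encoding of $M$ by $[a,b]$, and conclude finite determinacy.

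The main obstacle I expect is the passage $2)\Rightarrow 3)$: carefully justifying that $\hat S$-presentedness of $\overline{M}$, plus pointwise finite presentation, actually yields an honest finite presentation by projectives $\bigoplus R[\Mor(c_i,-)]$ over the \emph{infinite} poset $\overline{\mathbb Z}^n$. The point is that $\ind_{\hat S}$ of a pointwise-finitely-presented module over the \emph{finite} poset $\hat S$ must be shown to be finitely presented over $\overline{\mathbb Z}^n$; this requires knowing that $\ind$ sends the finitely presented $R\hat S$-module $\res_{\hat S}\overline{M}$ to a finitely presented $R\overline{\mathbb Z}^n$-module, which in turn rests on $\ind_{\hat S}R[\Mor_{\hat S}(s,-)]\cong R[\Mor_{\overline{\mathbb Z}^n}(s,-)]$ and right-exactness of $\ind_{\hat S}$. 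Establishing the pointwise finite presentation of $\res_{\hat S}\overline{M}$ in case $2)$ (where we do not yet know $M$ is finitely determined) needs Proposition~\ref{apuraja}~2) to identify the stalks $\overline{M}(s)$ with stalks of $M$, which are finitely presented by hypothesis.
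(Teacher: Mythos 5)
Your proposal is correct in outline but takes a genuinely different route from the paper's. The paper proves $1)\Leftrightarrow 2)$ entirely through the encoding formalism: it enlarges an arbitrary finite $S$ to $\overline{[a+u,b]}$, uses Theorem~\ref{S-det_luo_1} to translate $\overline{[a+u,b]}$-determinacy of $\overline{M}$ into an encoding by $\alpha'$, restricts along $\mathbb Z^n\hookrightarrow\overline{\mathbb Z}^n$, and composes with the poset isomorphism $\overline{\beta}$ to land on $\pi=\overline{\beta}\circ\overline{\alpha}$; the equivalence $2)\Leftrightarrow 3)$ is then quoted wholesale from \cite{Hyry}*{Thm.~4.15}. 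You instead run the cycle $1)\Rightarrow 2)\Rightarrow 3)\Rightarrow 1)$: your $1)\Rightarrow 2)$ is a direct verification of the determinacy condition via $\overline{M}(c)\cong M((\beta\circ\alpha)(c))$, which is essentially the argument the paper gives later for Corollary~\ref{aar_maar2_huom}; and your $2)\Rightarrow 3)$ and $3)\Rightarrow 1)$ re-derive the two halves of the cited theorem from \cite{Hyry} --- on one side, $\hat S$-presentedness plus pointwise finite presentation over the finite poset $\hat S$ (correctly traced back to the stalks of $M$ via Proposition~\ref{apuraja}~2)), pushed forward by the right-exact $\ind_{\hat S}$, which sends representables to representables; on the other side, the eventual constancy of the structure maps of a finitely presented module outside the box spanned by its generator and relator degrees, combined with the criterion for finite determinacy stated before Remark~\ref{aar_maar_kuvailu}. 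Your route buys self-containedness at the cost of length; the paper's route lets the single identity $\pi=\overline{\beta}\circ\overline{\alpha}$ do all the work for $1)\Leftrightarrow 2)$ and delegates the hard finiteness statement to the earlier paper.

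One justification in your $1)\Rightarrow 2)$ is wrong, though harmlessly so: $\supp(M)\subseteq{\uparrow}a$ does \emph{not} hold for a general finitely determined module (the paper says so explicitly just before Proposition~\ref{aar_maar}; a constant nonzero module is a counterexample), and even granting it, $(\beta\circ\alpha)(c)\geq a$ always, so the stalk would not be forced to vanish. Fortunately the support condition you are trying to verify is vacuous: $(-\infty,\ldots,-\infty)\in\overline{[a+u,b]}$, so ${\uparrow}\overline{[a+u,b]}=\overline{\mathbb Z}^n$. The same false premise reappears in the ``alternative'' ending of your $3)\Rightarrow 1)$, where $S$-determinacy of $\overline{M}$ is claimed to force $\supp(M)\subseteq{\uparrow}a$; your primary argument there (eventual constancy plus restriction to $\mathbb Z^n$) is the one to keep.
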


\begin{proof}
We will first show the equivalence of 1) and 2). Note that for any finite subset $S\subseteq \overline{\mathbb Z}^n$, we can always find $a,b\in \mathbb Z^n$ such that $S\subseteq \overline{[a+u,b]}$. Consider the functor
\[
\alpha'=\alpha_{\overline{[a+u,b]}}\colon\overline{\mathbb Z}^n\rightarrow \overline{[a+u,b]},
\]
and denote its restriction to $\mathbb Z^n$ by $\overline{\alpha}$.  By Theorem \ref{S-det_luo_1}, $\overline{M}$ is $\overline{[a+u,b]}$-determined if and only if $\overline{M}$ is encoded by $\alpha'$. That is, $\overline{M}\cong \res_{\alpha'}N$ for some $R\overline{[a+u,b]}$-module $N$. By restricting to $\mathbb Z^n$, we see that
\[
M\cong \res_{\mathbb Z^n}\res_{\alpha'}N=\res_{\overline{\alpha}}N,
\]
so $\overline{\alpha}$ encodes $M$. Conversely, if $M$ is encoded by $\overline{\alpha}$, then $\overline{M}$ has an obvious encoding by $\alpha'$, because $\overline{\alpha}$ is a surjection on objects. Next, we note that the restriction of $\beta$ to $\overline{[a+u,b]}$,
\[
\overline{\beta}\colon\overline{[a+u,b]}\rightarrow [a,b].
\]
is an isomorphism of posets. Therefore $\overline{\beta}\circ\overline{\alpha}$ is an encoding of $M$ if and only if $\overline{\alpha}$ is an encoding of $M$. These conditions are equivalent to $M$ being finitely determined, because $\overline{\beta}\circ\overline{\alpha}=\pi$. Namely, for all $c\in \mathbb Z^n$, we have $(\overline{\beta}\circ\overline{\alpha})(c)=(\beta\circ\alpha')(c)$, where
\begin{align*}
(\beta\circ\alpha')(c)_i
&=\left\{
\begin{aligned}
&\beta_i(-\infty), \ \text{if} \ c_i=a_i,\\
&\beta_i(\alpha_i(c_i)), \text{else.}
\end{aligned}
\right.\\
&=\left\{
\begin{aligned}
&a_i, \ \text{if} \ c_i=a_i,\\
&\pi_i(c_i), \text{else.}
\end{aligned}
\right.\\
&=\pi(c)_i
\end{align*}
for all $i\in\{1,\ldots,n\}$.

Finally, we observe that the equivalence of 2) and 3) follows from the main result of our previous paper, \cite{Hyry}*{p.~25, Thm.~4.15}. For $\overline{\mathbb Z}^n$-modules, it states that being pointwise finitely presented and $S$-determined for some finite $S\subseteq \overline{\mathbb Z}^n$ is equivalent to being finitely presented. Also note Remark \ref{aarellisyydet}, which shows us that $\overline{M}$ is pointwise finitely presented.
\end{proof}

We are now able to give a ``sharpened'' version of Proposition \ref{aar_maar}.

\begin{corollary}\label{aar_maar2_huom}
If $M$ is an $R\mathbb Z^n$-module and $a,b\in \mathbb Z^n$ such that $a\leq b$, then the following are equivalent:
\begin{itemize}
\item[1)] $M$ is encoded by the convex projection $\pi\colon \mathbb Z^n\rightarrow [a,b]$;
\item[2)] $\overline{M}$ is $\overline{[a+u,b]}$-determined, where $u:=(1,\ldots,1)\in \mathbb Z^n$.
\end{itemize}
\end{corollary}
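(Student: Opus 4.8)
The plan is to observe that this corollary is exactly the equivalence $1)\Leftrightarrow 2)$ proved \emph{inside} the proof of Theorem~\ref{aar_maar2}, only now with the endpoints $a,b$ fixed in advance; crucially, that portion of the argument never uses that $M$ is pointwise finitely presented, so it applies to an arbitrary $R\mathbb{Z}^n$-module. Accordingly, I would just reassemble the relevant chain of equivalences.

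First I would invoke Theorem~\ref{S-det_luo_1} with $\C=\overline{\mathbb{Z}}^n$ (which is strongly bounded from above, being a bounded join-semilattice under coordinatewise maximum) and the finite subset $S=\overline{[a+u,b]}$. Since $\widetilde{\overline{[a+u,b]}}=\overline{[a+u,b]}$ by Lemma~\ref{aar_karteesi}~3) and $\min(\overline{\mathbb{Z}}^n)=(-\infty,\ldots,-\infty)\in\overline{[a+u,b]}$, the theorem says that $\overline{M}$ is $\overline{[a+u,b]}$-determined if and only if $\alpha':=\alpha_{\overline{[a+u,b]}}\colon\overline{\mathbb{Z}}^n\to\overline{[a+u,b]}$ is an encoding of $\overline{M}$.

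Next I would transfer this to an encoding statement about $M$, following the proof of Theorem~\ref{aar_maar2}. Let $\overline{\alpha}$ be the restriction of $\alpha'$ to $\mathbb{Z}^n$. If $\overline{M}\cong\res_{\alpha'}N$, restricting along $\mathbb{Z}^n\hookrightarrow\overline{\mathbb{Z}}^n$ gives $M\cong\res_{\overline{\alpha}}N$ (because $\overline{M}$ restricts back to $M$ on $\mathbb{Z}^n$); conversely, if $M\cong\res_{\overline{\alpha}}N$, then for $c\in\overline{\mathbb{Z}}^n$ one has $\overline{M}(c)=\lim_{d\geq c,\,d\in\mathbb{Z}^n}M(d)\cong\lim_{d\geq c,\,d\in\mathbb{Z}^n}N(\alpha'(d))\cong N(\alpha'(c))$ by Proposition~\ref{jatkuvuus}~1), naturally in $c$, so $\overline{M}\cong\res_{\alpha'}N$. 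Hence $\alpha'$ encodes $\overline{M}$ iff $\overline{\alpha}$ encodes $M$. Since the restriction $\overline{\beta}\colon\overline{[a+u,b]}\to[a,b]$ of $\beta_{[a,b]}$ is an isomorphism of posets, $\res_{\overline{\beta}}$ is an isomorphism of module categories, so $\overline{\alpha}$ encodes $M$ iff $\overline{\beta}\circ\overline{\alpha}$ encodes $M$. Finally, the coordinatewise computation in the proof of Theorem~\ref{aar_maar2} (via Proposition~\ref{alfabeta_lemma}) shows $\overline{\beta}\circ\overline{\alpha}=\pi$, so this last condition is precisely condition~1). Concatenating these equivalences gives $1)\Leftrightarrow 2)$.

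I do not expect a genuine obstacle: everything needed is already available. The two steps deserving a little care are the ``conversely'' direction above, where one must appeal to the continuity statement Proposition~\ref{jatkuvuus}~1) rather than argue purely formally, and the two facts imported from the proof of Theorem~\ref{aar_maar2} — that $\overline{\beta}$ is a poset isomorphism onto $[a,b]$ and that $\overline{\beta}\circ\overline{\alpha}=\pi$ — but both of these are routine coordinatewise checks already carried out there, so the proof of the corollary is essentially bookkeeping.
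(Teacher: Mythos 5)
Your proof is correct, and your key observation --- that the equivalence $1)\Leftrightarrow 2)$ is already contained in the chain of encodings inside the proof of Theorem~\ref{aar_maar2} and that this portion never uses pointwise finite presentation --- is exactly the point of the corollary. The paper, however, only leans on that chain for the direction $2)\Rightarrow 1)$; for $1)\Rightarrow 2)$ it gives a direct verification of the determinacy condition instead: taking $c\leq d$ with $\overline{[a+u,b]}\cap{\downarrow}c=\overline{[a+u,b]}\cap{\downarrow}d$, it checks coordinatewise that $(\beta\circ\alpha)(c)=(\beta\circ\alpha)(d)$ and then uses the isomorphism $\overline{M}(-)\cong M((\beta\circ\alpha)(-))$ from Proposition~\ref{apuraja}~1) and Remark~\ref{aarellisyydet}. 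Your route instead re-traverses the encoding equivalences (Theorem~\ref{S-det_luo_1}, the passage between $\alpha'$ and $\overline{\alpha}$, the poset isomorphism $\overline{\beta}$, and $\overline{\beta}\circ\overline{\alpha}=\pi$). What your version buys is that it makes explicit, via Proposition~\ref{jatkuvuus}~1), the step the theorem's proof dismisses as an ``obvious encoding'' because $\overline{\alpha}$ is surjective on objects --- that is the right way to justify passing from $M\cong\res_{\overline{\alpha}}N$ to $\overline{M}\cong\res_{\alpha'}N$, though one should still note that the isomorphisms $\lim_{d\geq c}N(\alpha'(d))\cong N(\alpha'(c))$ are natural in $c$ (which follows from the cofinality argument in that proposition's proof). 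What the paper's direct argument buys is independence from that slightly delicate limit-interchange step. Both arguments are sound.
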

\begin{proof}
We showed in the proof of Theorem \ref{aar_maar2} that 2) implies 1). Conversely, suppose that 1) holds. Let $c\leq d$ in $\overline{\mathbb Z}^n$ such that $\overline{[a+u,b]}\cap{\downarrow}c=\overline{[a+u,b]}\cap{\downarrow}d$. Coordinatewise, for $i=\{1,\ldots,n\}$, this implies that either $c_i=d_i$, $b_i \leq c_i < d_i$ or $c_i<d_i\leq a_i$. In any case, $(\beta\circ\alpha)(c)=(\beta\circ\alpha)(d)$, so that
\[
\overline{M}(c)\cong M((\beta\circ\alpha)(c))=M((\beta\circ\alpha)(d))\cong \overline{M}(d).
\]
Thus $M(c\leq d)$ is an isomorphism, and $M$ is $\overline{[a+u,b]}$-determined.
\end{proof}

To demonstrate Theorem \ref{aar_maar2} and Corollary \ref{aar_maar2_huom}, it is convenient to take the point of view of topological data analysis, and consider the births and deaths of elements of a module. Given an $R\mathbb Z^n$ module $M$, one can track how an element $x\in \overline{M}(c)$, where $c\in \overline{\mathbb Z}^n$, evolves when mapped with the homomorphisms $M(c\leq c')$, ($c,c' \in \overline{\mathbb Z}^n$). We say that the element $x$ is born at $c$ if it is not in the image of any morphism $M(c'\leq c)$, where $c'<c$. On the other hand, the element $x$ dies at $c''$ if $M(c\leq c'')(x)=0$, but $M(c\leq c')(m)\neq 0$ for all $c\leq c'<c''$. 

Consider now an $R\mathbb Z^2$-module $M$ that is finitely determined, and let $\pi\colon \mathbb Z^2\rightarrow [a,b]$ be the accompanying convex projection. Note that no new elements are born or die in the leftmost edge or the bottom edge of the box $[a,b]$. This follows from the fact that every element on these two edges has already appeared infinite times before, and was born at some infinitary point. Let us write $a=(a_1,a_2)$. For example, if an element, say $x\in M((a_1,c))$, maps to zero on the leftmost edge of $[a,b]$, in $M((a_1,c+1))$, then $x\in \overline{M}((-\infty,c))$ will also map to zero in $\overline{M}((-\infty,c+1))$. Thus $x$ does not ``die'' at the point $(a_1,c+1)$, but rather at the infinitary point $(-\infty,c+1)\in \overline{[a+u,b]}$.

\begin{remark}\label{syntymat_kuolemat}
Let $M$ be an $R \mathbb Z^n$-module and $c\in \overline{\mathbb Z}^n$. Consider the natural homomorphism
\[
\lambda_{\overline{M},c}\colon \underset{d\leq c, d\in \overline{[a+u,b]}}\colim \overline{M}(d)\rightarrow \overline{M}(c).
\]
Following \cite{Hyry}*{p.~15, Def.~3.6}, we say that $c$ is a \emph{birth} if $\lambda_{\overline{M},c}$ is a non-epimorphism, and a \emph{death} if $\lambda_{\overline{M},c}$ is a non-monomorpism. Furthermore, suppose that  $\overline{M}$ is $\overline{[a+u,b]}$-determined, and the births are ``well-behaved'' enough. That is, for any birth $c$, the module $\overline{M}(c)/\Img{\lambda_{\overline{M},c}}$ is projective. The latter of course holds if $R$ is a field. Then, as we discussed in \cite{Hyry}*{p.~21, Remark~3.27}, births and deaths show the positions of the minimal generators and relations of $M$.
\end{remark}

In the next example, we will demonstrate how, for a finitely determined module $M$, the extension $\overline{M}$ has births and deaths at infinitary points that guarantee the existence of a finite presentation of $\overline{M}$. 

\begin{example}\label{tarkea esim}
Let $M$ be an $R\mathbb Z^2$-module that is defined on objects by
\[
M(c)=\left\{\begin{aligned}
&R, \ \text{if} \ c\leq (0,0);\\
&0, \ \text{otherwise},
\end{aligned} \right.
\]
for all $c\in \mathbb Z^2$, and where a morphism $R\rightarrow R$ is always $\id_R$. Then $M$ is finitely determined with the convex projection $\pi\colon \mathbb Z^2\rightarrow [(0,0),(1,1)]$. Now, by Remark \ref{aar_maar2_huom}, $\overline{M}$ is $\overline{[(1,1),(1,1)]}$-determined. Here $\overline{[(1,1),(1,1)]}$ is the set
\[
\{(-\infty,-\infty),(1,-\infty),(-\infty,1),(1,1)\}.
\]
In particular, we have $\overline{M}((-\infty,-\infty))=R$, and
\[
\overline{M}((-\infty,1))=\overline{M}((1,-\infty))=\overline{M}((1,1))=0.
\]
Furthermore, by Theorem \ref{aar_maar2}, $\overline{M}$ is now finitely presented. In more concrete terms, we have an exact sequence of $R\overline{\mathbb Z}^2$-modules 
\[
K \rightarrow N \rightarrow \overline{M} \rightarrow 0,
\]
where
\[
N=R[\Mor_{\overline{\mathbb Z}^2}((-\infty,-\infty),-)]
\]
and
\[
K=R[\Mor_{\overline{\mathbb Z}^2}((1,-\infty),-)]\oplus R[\Mor_{\overline{\mathbb Z}^2}((-\infty,1),-)].
\]
Here $(-\infty,-\infty)$ is the only birth of $M$, while $(1,-\infty)$ and $(-\infty,1)$ are the deaths.
\end{example}

\begin{example}\label{generated_presented}
If $k$ is a field, then it is well known that finitely generated $k\mathbb Z^n$-modules are finitely presented. This result, however, does not apply to $k\overline{\mathbb Z}^n$-modules. For a counterexample, consider a $k\mathbb Z^2$-module $M$, where
\[
M((x,y))=\left\{\begin{aligned}
&k, \text{ if } x+y<0;\\
&0, \text{ otherwise}.
\end{aligned}\right.
\]
Clearly $\overline{M}$ is finitely generated with its only birth in $(-\infty,-\infty)$. It is not finitely presented, since the deaths happen at points $(n,-n)$ for all $n\in\mathbb Z$.
\end{example}

Finally, we want to relate Theorem \ref{aar_maar2} to the work of Perling (\cite{Perling}). Recall that a subset $L\subseteq \overline{\mathbb Z}^n$ is a join-sublattice if $\mub(S)\in L$ for every finite subset $S\subseteq L$. Note that this is equivalent to the condition that $L=\hat{L}$. Given a join-sublattice $L\subseteq \overline{\mathbb Z}^n$, following Perling in \cite{Perling}*{pp.~16-19, Ch.~3.1}, we define the zip-functor 
\[
\zip_L\colon  R\mathbb Z^n\text{-}\textbf{Mod} \rightarrow RL\text{-}\textbf{Mod}
\]
and the unzip-functor
\[
\unzip_L\colon  RL\text{-}\textbf{Mod}\rightarrow R\overline{\mathbb Z}^n\text{-}\textbf{Mod}.
\]
Contrary to Perling, we do not assume that $R$ is a field. The zip-functor maps an $R\mathbb Z^n$-module $M$ to the $RL$-module $\res_L\overline{M}$, whereas he unzip-functor maps an $RL$-module $N$ to an $R\overline{\mathbb Z}^n$-module $\unzip_L N$ defined by
\[
(\unzip_L N)(c)=\left\{\begin{aligned} &N(\mub(L\cap {\downarrow}c)), \ \text{if} \ L\cap {\downarrow}c\neq \emptyset; \\
  &0, \ \text{otherwise}
\end{aligned}\right.
\]
for all $c\in\overline{\mathbb Z}^n$. Note that $\supp(\unzip_L N)\subseteq {\uparrow}L$.

\begin{remark}\label{unzip_selitys}
It turns out that $\unzip_L$ is essentially the same thing as $\res_{\alpha}$, when $L$ is finite and $\alpha:=\alpha_L$. There is the slight complication that $\unzip_L$ is defined for $RL$-modules, while $\res_{\alpha}$ is defined for $R\tilde{L}$-modules. We may, however, extend an $RL$-module $N$ to an $R{\tilde{L}}$-module $\tilde{N}$ by setting 
\[
\tilde{N}((-\infty,\ldots,-\infty))=0,
\] 
if $(-\infty,\ldots,-\infty)\notin L$, and $\tilde{N}(c)=N(c)$, otherwise. Having defined the module $\tilde{N}$ in this way, we see that $\unzip_L N \cong \res_{\alpha} \tilde{N}$.
\end{remark}

Given an $R\mathbb Z^n$-module $M$, the join-sublattice $L$ is called \emph{$M$-admissible} in \cite{Perling}*{p.~18, Def.~3.4} if the condition $\overline{M} \cong \unzip_L\zip_L M$ is satisfied. This leads us to the following proposition.

\begin{proposition}\label{PerlingProposition}
Let $M$ be an $R\mathbb Z^n$-module, and $L$ a finite join-sublattice. Then $L$ is $M$-admissible if and only if $\overline{M}$ is $L$-determined.
\end{proposition}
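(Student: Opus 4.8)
The plan is to connect the two sides of the equivalence through the machinery already assembled in the excerpt. Since $L$ is a finite join-sublattice of $\overline{\mathbb Z}^n$, Remark~\ref{unzip_selitys} tells us that $\unzip_L \zip_L M \cong \res_{\alpha} \widetilde{\zip_L M}$, where $\alpha := \alpha_L$ and $\widetilde{\zip_L M}$ is the extension of $\zip_L M = \res_L \overline{M}$ to an $R\widetilde{L}$-module by setting the value at the bottom point to $0$ if $(-\infty,\dots,-\infty)\notin L$. The first observation I would make is that $\widetilde{\zip_L M} \cong \res_{\widetilde{L}}\overline{M}$: indeed on $L$ both agree with $\res_L\overline{M}$, and at the possibly extra bottom point $(-\infty,\dots,-\infty)$ one checks that $\overline{M}$ already vanishes there — because $\overline{M}((-\infty,\dots,-\infty)) = \lim_{d\in\mathbb Z^n} M(d)$, and... actually here I should be a little careful, since $\overline{M}$ need not vanish at the bottom point in general. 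Instead, the cleaner route is to observe that $\widetilde L = L$ already when $(-\infty,\dots,-\infty)\in L$, and when it is not, admissibility forces $\overline{M}$ to vanish there anyway (since $\supp(\unzip_L \zip_L M)\subseteq {\uparrow}L$, so if $\overline M\cong\unzip_L\zip_L M$ then $\overline M((-\infty,\dots,-\infty))=0$, matching $\widetilde{\zip_L M}$). So in the situation where the equivalence has content, $\widetilde{\zip_L M}\cong\res_{\widetilde L}\overline M$.

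With this identification in hand, the equivalence becomes an instance of Theorem~\ref{S-det_luo_1} applied to the poset $\C = \overline{\mathbb Z}^n$ and the $R\overline{\mathbb Z}^n$-module $\overline{M}$ with the finite set $L$. The key point is that since $L$ is a finite join-sublattice, $\hat L = L$, hence $\widetilde L = L \cup \{\min(\overline{\mathbb Z}^n)\}$ and $\alpha_L = \alpha_{\widetilde L}$ by Proposition~\ref{alfat}. Now: $\overline{M}$ is $L$-determined means precisely condition 1) of Theorem~\ref{S-det_luo_1} with $S = L$ (and, since $\min(\overline{\mathbb Z}^n) = (-\infty,\dots,-\infty)$ may or may not lie in $L$, one appeals to the remark in the proof of that theorem that one may harmlessly enlarge $S$ to include the minimum without changing $\hat S$, $\widetilde S$, or $\alpha$). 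By Theorem~\ref{S-det_luo_1}, this is equivalent to condition 3), that $\alpha = \alpha_L$ is an encoding of $\overline{M}$, i.e. $\overline{M}\cong\res_\alpha N$ for some $R\widetilde L$-module $N$ — equivalently, by condition 2), $T_\alpha\colon\res_\alpha\res_{\widetilde L}\overline{M}\to\overline{M}$ is an isomorphism, which says exactly $\overline{M}\cong\res_\alpha\res_{\widetilde L}\overline{M} = \res_\alpha\widetilde{\zip_L M} \cong \unzip_L\zip_L M$, i.e. $L$ is $M$-admissible.

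So the skeleton of the argument is: (i) spell out that $L$-determinacy of $\overline M$ is condition 1) of Theorem~\ref{S-det_luo_1}; (ii) invoke Theorem~\ref{S-det_luo_1} to pass to condition 2), $\overline M\cong\res_\alpha\res_{\widetilde L}\overline M$; (iii) rewrite the right-hand side as $\unzip_L\zip_L M$ using Remark~\ref{unzip_selitys} together with the identification $\widetilde{\zip_L M}\cong\res_{\widetilde L}\overline M$; and (iv) note that this is the definition of $M$-admissibility, with the reverse implications all reversible. I expect the main obstacle to be precisely step (iii): matching up the extended module $\widetilde{\zip_L M}$ (whose value at the extra bottom point was \emph{declared} to be $0$) with $\res_{\widetilde L}\overline M$ (whose value there is whatever $\overline M$ dictates). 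Handling this requires the observation that when $(-\infty,\dots,-\infty)\notin L$, both sides of the putative isomorphism $\overline M\cong\unzip_L\zip_L M$ are forced to vanish at that point by the support condition $\supp(\unzip_L\zip_L M)\subseteq{\uparrow}L$, so there is no genuine discrepancy; and conversely, for the "$L$-determined $\Rightarrow$ admissible" direction, one first enlarges $L$ to $L\cup\{(-\infty,\dots,-\infty)\}$ — which remains a finite join-sublattice, does not change admissibility of the \emph{other} points, but may change whether the support condition is automatic — so a small amount of care is needed, and I would state a short lemma isolating this bookkeeping before the main chain of equivalences.
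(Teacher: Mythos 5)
Your proposal is correct and follows essentially the same route as the paper: both reduce the statement to Theorem~\ref{S-det_luo_1} via Remark~\ref{unzip_selitys}, identifying $\unzip_L\zip_L M$ with $\res_{\alpha}\res_{\tilde L}\overline M$ up to the vanishing of $\overline M$ outside ${\uparrow}L$. The bookkeeping you flag as the main obstacle (matching $\widetilde{\zip_L M}$ with $\res_{\tilde L}\overline M$ at the bottom point, i.e.\ the support condition in the definition of $L$-determinacy versus the ``$0$ otherwise'' clause of $\unzip_L$) is exactly what the paper handles by its pointwise case analysis on whether $L\cap{\downarrow}c=\emptyset$, and your derivation of that vanishing from each of the two hypotheses is the right one.
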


\begin{proof}
Let $c\in \overline{\mathbb Z}^n$. With the earlier notation, we see that
\[
\unzip_L\zip_L M = \unzip_L\res_L\overline{M}\cong \res_{\alpha} \widetilde{\res_L\overline{M}},
\]
where
\[
(\res_{\alpha}\widetilde{\res_L \overline{M}})(c)=\left\{\begin{aligned} &(\res_{\alpha}\res_{\tilde{L}}\overline{M})(c), \ \text{if} \ L\cap {\downarrow}c\neq \emptyset; \\
  &0, \ \text{otherwise.}
\end{aligned}\right.
\]
Assume first that $\overline{M}\cong \unzip_L\zip_L M$. If $L\cap {\downarrow}c =\emptyset$, we have $\overline{M}(c)=0$ by the definition of the functor $\unzip_L$. But in this case $\alpha(c)\leq c$, so that $L\cap {\downarrow}\alpha(c)=\emptyset$. Using the definition of $\unzip_L$ again, we get
\[
(\res_{\alpha}\res_{\tilde{L}}\overline{M})(c)=\overline{M}(\alpha(c))=0.
\]
 On the other hand, if there is an element $d\in L\cap{\downarrow}c$, then, by the above formula, $\overline{M}(c)\cong (\res_{\alpha}\res_{\tilde{L}}\overline{M})(c)$. Thus, 
\[
\overline{M}\cong \res_{\alpha}\res_{\tilde{L}}\overline{M}
\]
and $\supp(\overline{M})\subseteq {\uparrow} L$, so $\overline{M}$ is $L$-determined by Theorem \ref{S-det_luo_1}.

Conversely, suppose that $\overline{M}$ is $L$-determined. By Theorem \ref{S-det_luo_1}, we have $\overline{M}\cong \res_{\alpha}\res_{\tilde{L}}\overline{M}$ and $\supp(\overline{M})\subseteq {\uparrow} L$. The above formula shows us that
\[
(\unzip_L\zip_L M)(c) = (\res_{\alpha}\res_{\tilde{L}}\overline{M})(c)
\]
for all $c\in {\uparrow}L$. If $c\notin {\uparrow}L$, then $c\notin \supp(\overline{M})$, which means that $\overline{M}(c)=0$. In this case, we also have $(\unzip_L\zip_L M)(c)=0$ by the definition of the functor $\unzip_L$. Thus we have an isomorphism
\[
\overline{M}\cong \unzip_L\zip_L M.
\]
\end{proof}


\begin{bibdiv}
\begin{biblist}

\bib{Brun}{article}{
      author={Brun, Morten},
      author={Fl{\o}ystad, Gunnar},
       title={The auslander--reiten translate on monomial rings},
        date={2011},
     journal={Advances in Mathematics},
      volume={226},
      number={1},
       pages={952\ndash 991},
}

\bib{Carlsson}{article}{
      author={Carlsson, Gunnar},
      author={Zomorodian, Afra},
       title={The theory of multidimensional persistence},
        date={2009},
     journal={Discrete \& Computational Geometry},
      volume={42},
      number={1},
       pages={71\ndash 93},
}

\bib{Djament}{article}{
      author={Djament, Aur{\'e}lien},
       title={Des propri{\'e}t{\'e}s de finitude des foncteurs polynomiaux},
        date={2016},
     journal={Fundamenta Mathematicae},
      volume={233},
       pages={197\ndash 256},
}

\bib{Hyry}{article}{
      author={Hyry, Eero},
      author={Klemetti, Markus},
       title={Generalized persistence and graded structures},
        date={2022},
     journal={Homology, Homotopy and Applications},
      volume={24},
      number={1},
       pages={27\ndash 53},
}

\bib{Luck}{book}{
      author={L{\"u}ck, Wolfgang},
       title={Transformation groups and algebraic k-theory},
   publisher={Springer},
        date={1989},
}

\bib{Miller3}{article}{
      author={Miller, Ezra},
       title={The alexander duality functors and local duality with monomial
  support},
        date={2000},
     journal={Journal of Algebra},
      volume={231},
      number={1},
       pages={180\ndash 234},
}

\bib{Miller}{article}{
      author={{Miller}, Ezra},
       title={{Homological algebra of modules over posets}},
        date={2020-07},
     journal={arXiv e-prints},
       pages={arXiv:2008.00063},
      eprint={2008.00063},
}

\bib{Perling}{article}{
      author={Perling, Markus},
       title={Resolutions and cohomologies of toric sheaves: the affine case},
        date={2013},
     journal={International Journal of Mathematics},
      volume={24},
      number={09},
       pages={1350069},
}

\bib{Popescu}{book}{
      author={Popescu, Nicolae},
       title={Abelian categories with applications to rings and modules},
   publisher={Academic Press},
        date={1973},
      volume={3},
}

\bib{TomDieck}{book}{
      author={tom Dieck, Tammo},
       title={Transformation groups and representation theory},
   publisher={Springer},
        date={2006},
      volume={766},
}

\end{biblist}
\end{bibdiv}

\end{document}